\begin{document}

\def\CX{{\mathbb C}}
\def\QX{{\mathbb Q}}
\def\NX{{\mathbb N}}
\def\ZX{{\mathbb Z}}
\def\PX{{\mathbb P}}
\def\GX{{\mathbb G}}
\def\Ga{{\GX_a}}
\def\Gm{{\GX_m}}
\def\ord{\mbox{ord }}
\def\GL{{\rm GL}}
\def\SL{{\rm SL}}
\def\gl{{\rm gl}}
\def\Scal{{\rm Scal}}
\def\diag{{\rm Diag}}
\def\d{{
\partial}}
\def\frakG{{\mathfrak G}}
\def\calD{{\mathcal D}}
\def\calC{{\mathcal C}}
\def\calO{{\mathcal O}}
\def\calP{{\mathcal P}}
\def\calL{{\mathcal L}}
\def\d{{
\partial}}
\def\dx{{
\partial_{x}}}
\def\dt{{
\partial_{t}}}
\def\Gal{{\rm Gal}}
\def\tbar{{t}}
\def\Hbar{{\overline{H}}}
\def\Gbar{{\overline{G}}}
\def\Gbaro{{\overline{G^0}}}
\def\vr{{\Vec{r}}}
\def\va{{\Vec{\mathbf a}}}
\def\vb{{\Vec{\mathbf b}}}
\def\vc{{\Vec{\mathbf c}}}
\def\vf{{\Vec{\mathbf f}}}
\def\vy{{\Vec{\mathbf y}}}
\def\vta{{\Vec{\tau}}}
\def\vt{{\Vec{t}}}
\def\sd{{\sigma\delta}}
\def\ord{{\rm ord}}
\def\pp{{\mathbb{P}}^1({\mathbb{C}})}




\date{}
\def\QED{\hbox{\hskip 1pt \vrule width4pt height 6pt depth 1.5pt \hskip 1pt}}
\newcounter{defcount}[section]
\setlength{\parskip}{1ex}
\newtheorem{thm}{Theorem}[section]
\newtheorem{lem}[thm]{Lemma}
\newtheorem{cor}[thm]{Corollary}
\newtheorem{prop}[thm]{Proposition}
\newtheorem{defin}[thm]{Definition}
\newtheorem{defins}[thm]{Definitions}
\newtheorem{remark}[thm]{Remark}
\newtheorem{remarks}[thm]{Remarks}
\newtheorem{ex}[thm]{Example}
\newtheorem{exs}[thm]{Examples}
\def\res{{\rm res}}

\def\semi{\hbox{${\vrule height 5.2pt depth .3pt}\kern -1.7pt\times $ }}

\newenvironment{prf}[1]{\trivlist
\item[\hskip \labelsep{\bf
#1.\hspace*{.3em}}]}{~\hspace{\fill}~$\square$\endtrivlist}
\newenvironment{proof}{\begin{prf}{Proof}}{\end{prf}}
 \def\square{\QED}
 \newenvironment{proofofthm}{\begin{prf}{Proof of Theorem~\ref{ext}}}{\end{prf}}
 \def\square{\QED}
\newenvironment{sketchproof}{\begin{prf}{Sketch of Proof}}{\end{prf}}

\numberwithin{equation}{section}

\title{Linear Algebraic Groups as Parameterized Picard-Vessiot Galois Groups}



\author{Michael F. Singer\footnote{Department of Mathematics, North Carolina
State University, Box 8205, Raleigh, North Carolina 27695-8205, email:\texttt{singer@math.ncsu.edu}. The  author was partially supported by NSF Grant CCF-1017217. 2010 Mathematics Subject Classification: 12H05, 20G15, 34M03, 34M15, 34M50}}


\date{}




\maketitle

\begin{abstract} We show that a linear algebraic group is the Galois group of a parameterized Picard-Vessiot extension of $k(x), \ x'=1$, for certain differential fields $k$,  if and only if its identity component has no one dimensional quotient as a linear algebraic group.
\end{abstract}
\section{Introduction} In the usual Galois theory of polynomial equations, one starts with a polynomial having coefficients in a field\footnote{In this paper, all fields considered are of characteristic zero.} $k$, forms a splitting field $K$ of this polynomial and then defines the Galois group of this equation to be the group of field automorphisms of $K$ that leave $k$ element-wise fixed. A natural inverse question then arises: {\it Given the field $k$, which groups can occur as Galois groups}. For example, if $k= C(x), C$ an algebraically closed field  and $x$ transcendental over $C$, any finite group occurs as a Galois group (Corollary 7.10,\cite{volklein}). In the Galois theory of linear differential equations, one starts with a homogeneous linear differential equation  with coefficients in a differential field $k$ with algebraically closed constants $C$, forms a  Picard-Vessiot extension $K$ (the analogue of a splitting field) and defines the Galois group of the linear differential equation to be the differential automorphisms of $K$ that leave $k$ element-wise fixed. This Galois group is a linear algebraic group defined over $C$ and one can again seek to determine which groups occur as the Galois group of a homogeneous linear differential equation over a given differential field. For example, if $k = C(x), C$ an algebraically closed field, $x' = 1$ and $c'=0$ for all $c \in C$, then any linear algebraic group occurs as Galois group of a Picard-Vessiot extension of $k$ (\cite{hartmann2005, hartmann2007} for proofs of this as well as  references to earlier work). Besides putting the Picard-Vessiot theory on a firm modern footing, Kolchin developed a generalization of Picard-Vessiot extensions called {\em strongly normal extensions} and developed a Galois theory for these fields (see \cite{DAAG} for an exposition and references to the original articles and \cite{kovacic03} for a reworking of this theory in terms of differential schemes). The Galois groups of these extensions can be arbitrary algebraic groups. Kovacic \cite{kovacic69, kovacic71} studied the general inverse problem in the context of  strongly normal extensions and showed that this problem can be reduced to the inverse problem for linear algebraic groups and for abelian varieties. If $k = C(x)$ as above, Kovacic showed  that any abelian variety can be realized  and, combining  this with the solution for linear algebraic groups described above, one sees  that any algebraic group defined over $C$ can be realized as a Galois group of a strongly normal extension of $C(x)$ (Kovacic also  solved the inverse problem for connected solvable linear algebraic groups and laid out a general plan for attacking the inverse problem for linear groups over arbitrary fields).\\[0.1in]
In \cite{Landesman}, Landesman developed a new Galois theory  generalizing Kolchin's theory of strongly normal extension to include, for example, certain differential equations that contain parameters. The Galois groups appearing here are differential algebraic groups (as in \cite{kolchin_groups}).   A special case was developed   in \cite{CaSi} where the authors consider parameterized {\em linear}  differential equations and discuss various properties of the associated Galois groups, named parameterized Picard-Vessiot groups or  PPV-groups for short. These latter groups are linear differential algebraic groups in the sense of Cassidy~\cite{cassidy1}, that is, groups of matrices whose entries belong to a differential field and satisfy a fixed set of differential equations.  The inverse problem in these theories is not well understood. Landesman showed  that any connected differential algebraic group is a Galois group in his theory over {\it some} differential field that may depend on the given differential algebraic group (Theorem 3.66, \cite{Landesman}). The analogue of the field $C(x)$ mentioned above is a field $k_0(x)$ with commuting  derivations $\Delta = \{\d_x,\d_1, \ldots, \d_m\}, \ m\geq 1,$ where $k_0$ is a differentially closed (see the definition below) $\Pi = \{\d_1, \ldots ,\d_m\}$-differential field,  $x$ is transcendental over $k_0$,  $\d_i(x) = 0$ for $i=1, \ldots ,m$ and   $\d_x$ is defined on $k$ by setting $\d_x(a) = 0$ for all $a \in k_0$ and $\d_x(x) = 1$. It is not known, in general,  which differential algebraic groups  appear as Galois groups in Landesman's theory over this field. In  \cite{Landesman} and \cite{CaSi}, it is shown  that the additive group $\Ga(k_0)$  cannot appear while any proper subgroup of these groups does appear as a Galois group (the same situation for $\Gm(k_0)$ is also described in \cite{Landesman})\footnote{There are other  Galois theories of differential equations  due primarily to Malgrange \cite{malgrange_galois_04}, Pillay \cite{pillaygalois2, pillaygalois1, pillaygalois3} and Umemura \cite{umemura_invitation}. In particular,  inverse problems are addressed in  \cite{pillaygalois1}.  We will not consider these theories here.}. \\[0.1in]
%
%
%
%
In this paper, further progress is made on the inverse problem for the parameterized Picard-Vessiot theory over the field $k=k_0(x)$ described in the previous paragraph.  In the following, I characterize those linear {\it algebraic} groups,  considered as linear {\it differential} algebraic groups, that can occur as PPV-groups of PPV-extensions of $k$ (under suitable hypotheses concerning $k$). Before I state the main result of this paper, I will recall some definitions. Although these definitions may be stated in more generality, I will state them relative to the field $k$ defined above.\\[0.1in]
The parameterized Picard-Vessiot theory (PPV-theory) considers linear differential equations of the form
\begin{eqnarray}\label{ppveqn}\d_xY &=& AY\end{eqnarray}
where $A \in \gl_n(k)$. In analogy to classical Galois theory and Picard-Vessiot theory, we consider fields, called {\em PPV-extensions of $k$}, that act as ``splitting fields'' for such equations. A PPV-extension $K$ of $k$ for (\ref{ppveqn}) is a $\Delta$-field $K$ such that
\begin{enumerate}
\item $K = k\langle Z \rangle$, the $\Delta$-field generated by the entries of a matrix $Z \in \gl_n(K)$ satisfying $\d_xZ = AZ, \ \det(Z) \neq 0$.
\item $K^{\d_x} = k^{\d_x} = k_0$, where for any $\Delta$-extension $F$ of $k$, $F^{\d_x} = \{c \in F \ | \ \d_xc=0\}$.
\end{enumerate}
A $\Pi$-field $E$ is said to be {\it differentially closed}  (also called {\it constrainedly closed}, see, for example  \S 9.1 of \cite{CaSi}) if for any $n$ and any set $\{P_1(y_1, \ldots, y_n), \ldots, P_r(y_1, \ldots ,y_n), \linebreak  Q(y_1, \ldots
,y_n)\} \subset E\{y_1, \ldots , y_n\}$, the ring of differential polynomials in $n$ variables,  if the system 
\[
\{P_1(y_1, \ldots, y_n)=0, \ldots , 
P_r(y_1, \ldots ,y_n)=0, Q(y_1, \ldots ,y_n)\neq 0\}\]
has a solution in some  differential field $F$ containing $E$, then it has a solution in $E$. In \cite{CaSi} (and more generally in \cite{HaSi08}), it is shown that under the assumption that $k_0$ is  differentially closed, then PPV-extensions exist and are unique up to $\Delta$-$k$-isomorphisms.  This hypothesis has been weakened   to non-differentially closed $k_0$ in \cite{GGO}  and \cite{wibmer}. In these papers the authors give conditions weaker than differential closure for the existence and uniqueness of PPV-extensions and and discuss the corresponding Galois theory.  Although some of our results remain valid under these weaker hypotheses, we will assume in this paper that $k_0$ is $\Pi$-differentially closed.  The set of field-theoretic automorphisms of $K$ that leave $k$ elementwise  fixed and commute with the elements of  $\Delta$ forms a group $G$ called the {\it  parameterized Picard-Vessiot group } (PPV-group)  of  (\ref{ppveqn}). One can show that for any $\sigma \in G$, there exists a matrix $M_\sigma \in \GL_n(k_0)$ such that $\sigma(Z) =(\sigma(z_{i,j})) = ZM_\sigma$. Note that $\d_x$ applied to an entry of such an $ M_\sigma$ is  $0$ since these entries are elements of $k_0$ but that such an entry need not be constant with respect to the elements of $\Pi$.   In \cite{CaSi}, the authors show that the map $\sigma \mapsto M_\sigma$ is an isomorphism whose image is furthermore a {\it linear differential algebraic group}, that is, a group of invertible matrices whose entries satisfies some fixed set of polynomial {\it differential}  equations (with respect to the derivations $\Pi = \{\d_1, \ldots, \d_m\}$) in $n^2$ variables. We say that a set $X \subset \GL_n(k_0)$ is {\it Kolchin-closed} if it is the zero set of such a set of polynomial differential equations.  One can show that the Kolchin-closed sets form the closed sets of a topology, called the {\it Kolchin topology} on $\GL_n(k_0)$ ({\it cf.} \cite{cassidy1, cassidy6, CaSi, kolchin_groups}). \\[0.1in]
One more definition is needed before stating the  main result of this paper, Theorem~\ref{thm1}.  A  $\Pi$-field $F$ is a {\it $\Pi$-universal field } if for any $\Pi$-field $E \subset F$, finitely differentially generated over $\QX$, any $\Pi$-finitely generated extension of $E$ can be differentially embedded over $E$  into $F$ (\cite{DAAG}, p.~133). Note that a universal field is differentially closed.
%
%

%

%


\begin{thm} \label{thm1}Let $k$ be as above and $G(k_0)$ the group of $k_0$-points of a linear algebraic group $G$ defined over $k_0$.
\begin{enumerate}
\item[1.] If $k_0$ is $\Pi$-differentially closed  and $G(k_0)$ is a PPV-group of a PPV-extension of $k$, then the identity component of $G$ has no quotient (as an algebraic group) isomorphic to the additive group $\Ga$ or the multiplicative group $\Gm$.
\item[2.] If $k_0$ is a $\Pi$-universal field  and the identity component of $G$ has no quotient (as an algebraic group) isomorphic to the additive group $\Ga$ or the multiplicative group $\Gm$, then $G(k_0)$ is a PPV-group of a PPV-extension of $k$. \end{enumerate}
\end{thm}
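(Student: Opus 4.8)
The plan is to establish the two implications separately. Throughout I write $G^0$ for the identity component and record first a purely group-theoretic reformulation: since $G^0/[G^0,G^0]$ is a connected commutative linear algebraic group, hence a product of copies of $\Ga$ and $\Gm$, the group $G^0$ has a quotient isomorphic to $\Ga$ or $\Gm$ if and only if $G^0 \neq [G^0,G^0]$. Thus the hypothesis on $G$ is exactly that $G^0$ be its own derived group, and the content of the theorem is that this ``perfectness'' is precisely the obstruction to realizing $G(k_0)$ as a PPV-group over $k=k_0(x)$. The whole argument rests on the cited fact that neither $\Ga(k_0)$ nor $\Gm(k_0)$ is a PPV-group over $k_0(x)$, together with a suitable strengthening of that fact to finite extensions of $k_0(x)$.

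For Part~1, suppose $G(k_0)$ is the PPV-group of an extension $K/k$ and, contrary to the conclusion, that there is a surjection $\pi\colon G^0 \to \Gm$ (the $\Ga$ case is identical). I would first pass to the fixed field $F = K^{G^0}$: since $G/G^0$ is finite, $F$ is a finite extension of $k$ and $K/F$ is a PPV-extension with group $G^0(k_0)$. The kernel $N = \ker \pi$ is a normal Kolchin-closed subgroup with $G^0/N \cong \Gm$, so by the parameterized Galois correspondence $K^N$ is a PPV-extension of $F$ with PPV-group $\Gm(k_0)$. It then remains to show that $\Gm(k_0)$ cannot be a PPV-group over $F$; one cannot merely quote the result for $k_0(x)$, so I would re-run the underlying residue argument over $F$, which is the function field of a curve over $k_0$. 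Writing $K^N = F\langle z\rangle$ with $\d_x z = b z$, the logarithmic derivatives $u_i = \d_i z / z$ satisfy $\d_x u_i = \d_i b \in F$, and a residue/partial-fraction analysis on the curve shows that, modulo $F$, the $k_0$-span of all iterated $\Pi$-derivatives of the $u_i$ is finite dimensional. This yields a nonzero linear differential relation among the $u_i$ with coefficients in $k_0$, valid modulo $F$; evaluating it on a group element $c$ (under which $u_i \mapsto u_i + \d_i c/c$) forces a nontrivial differential condition on the $\d_i c/c$, which no element of the full $\Gm(k_0)$ can satisfy since $k_0$ is differentially closed.

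For Part~2, I would realize $G(k_0)$ by constructing an equation $\d_x Y = A Y$ with $A \in \gl_n(k)$ and then proving its PPV-group is all of $G(k_0)$. There are three ingredients. First, reduce to the connected case by realizing the finite quotient $G/G^0$ as a classical Galois group over $k_0(x)$ and combining it with a realization of $G^0$, using that the relevant embedding problem is solvable because $k_0$ is algebraically closed and $x$ transcendental. Second, for connected perfect $G^0$ fix a faithful representation $G^0 \subseteq \GL_n$ and take $A$ to be a Fuchsian matrix whose residues involve elements of $k_0$ that are differentially transcendental over $\QX$; it is here that the $\Pi$-universality of $k_0$ is used, to guarantee the existence of such generic parameters. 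Third, identify the PPV-group $H \subseteq G^0(k_0)$: arranging that the classical (constant-parameter) differential Galois group of the equation is $G^0$ makes $H$ Zariski dense in $G^0$, and it remains to upgrade density to equality. For this I would invoke the structure theory of Zariski-dense Kolchin-closed subgroups of $G^0(k_0)$ (Cassidy, and Cassidy--Singer): every proper such subgroup forces a linear differential relation on the solution coordinates, and — this is where perfectness enters — when $G^0$ has no one-dimensional quotient every such relation can be traced back to, and excluded by, the differential transcendence of the chosen parameters, giving $H = G^0(k_0)$.

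The main obstacle I anticipate is the fullness step in Part~2: showing that the generically chosen equation has PPV-group equal to $G^0(k_0)$ rather than a proper Zariski-dense Kolchin-closed subgroup, such as the constant points $G^0(k_0^{\Pi})$, which are themselves Zariski dense even though $G^0$ is perfect. Perfectness does not by itself exclude proper Zariski-dense subgroups; it guarantees only that the mechanism producing them is a one-dimensional additive or multiplicative quotient, and the delicate point is to verify that a differentially transcendental choice of parameters avoids all of these simultaneously. A secondary difficulty is the bookkeeping in Part~1 needed to transport the $\Ga$- and $\Gm$-obstruction from $k_0(x)$ to its finite extensions, and, in Part~2, the assembly of the connected and finite pieces into a realization of $G$ itself.
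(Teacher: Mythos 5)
Your Part~1 follows the paper's route almost exactly: pass to the fixed field of $G^0$, apply the parameterized Galois correspondence to the kernel of the quotient map to obtain a PPV-extension of a finite algebraic extension $E$ of $k_0(x)$ with group $\Ga(k_0)$ or $\Gm(k_0)$, and kill it with Manin's Picard--Fuchs/residue argument on the curve (the paper's Lemma~\ref{lem2}). The one divergence is that you run the $\Gm$ case directly via the logarithmic derivatives $\d_i z/z$, whereas the paper composes the surjection $G^0\to\Gm$ with $l\d_1\colon \Gm(k_0)\to\Ga(k_0)$ (surjective by Lemma~\ref{lem0}, since $k_0$ is differentially closed), so that only the $\Ga$ case ever needs the residue argument, and there Kolchin's theorem on principal homogeneous spaces gives the torsor triviality for free. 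Your variant additionally owes a justification of the step $K^N=F\langle z\rangle$ with $\sigma(z)=c_\sigma z$, i.e.\ triviality of a $\Gm$-torsor over $F$, which you assume silently; that is a repairable detail, not a gap.

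Part~2 is where the proposal breaks, and precisely at the step you flag as a worry. First, your fallback claim --- that perfectness of $G^0$ at least guarantees that every proper Zariski-dense Kolchin-closed subgroup is ``produced by'' a one-dimensional additive or multiplicative quotient --- is false. By Cassidy's classification (\cite{cassidy6}, used in the paper's Lemma~\ref{lem4}), a proper Zariski-dense Kolchin-closed subgroup of a connected semisimple group is a conjugate of the points of a semisimple algebraic group over a field of constants $C\subset k_0$; such subgroups exist for every semisimple $G^0$, have nothing to do with $\Ga$- or $\Gm$-quotients, and are not visibly excluded by any differential transcendence of residues. Second, and more fundamentally, your strategy cannot sidestep the question it leaves open: for parameterized regular-singular equations the PPV-group \emph{is} the Kolchin closure of the finitely generated monodromy group (the parameterized density theorem of \cite{MiSi11}, the same source the paper relies on). Hence any realization of $G(k_0)$ by a Fuchsian system forces $G(k_0)$ to contain a Kolchin-dense finitely generated subgroup, so your construction succeeds only if such a subgroup exists --- and establishing that existence is exactly the content missing from your proposal. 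It is the heart of the paper's Section~3: Proposition~\ref{fingen} shows that $G^0$ has no $\Ga$ or $\Gm$ quotient if and only if $G(k_0)$ has a Kolchin-dense finitely generated subgroup, proved via the Levi decomposition $G^0=P\ltimes R_u$ (the hypothesis forces $P$ semisimple), Kovacic's lemma \cite{kovacic69} to reduce to commutative $R_u$, density in $P$ obtained by adjoining to a Zariski-dense finitely generated subgroup torus elements whose entries satisfy $\det(\d_i x_j)\neq 0$ (defeating Cassidy's constant-point subgroups), and highest-weight theory for the unipotent part (where ``no $\Ga$ quotient'' is used). Theorem 1.2 then follows by quoting Proposition~\ref{mono} from \cite{MiSi11}. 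Note finally that universality of $k_0$ enters for a different reason than you assign to it: not to supply differentially transcendental residues, but because the analytic construction in \cite{MiSi11} gives no control over the field in which the coefficients of the constructed equation lie.
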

The remainder of this paper is organized as follows. Section~\ref{sec2}  contains the proof of Theorem 1.1.  In fact, I prove the stronger result  (Proposition~\ref{prop1}) that if $G$ is a linear {\it differential} algebraic group such that $G(k_0)$ is the PPV-group of a PPV-extension of $k$, then the identity component of $G(k_0)$ has no quotient (as a linear differential algebraic group) isomorphic to $\Ga(k_0)$ or $\Gm(k_0)$. 
In Section~\ref{sec3}, I show that a linear algebraic group contains a Kolchin-dense finitely generated subgroup if and only if it has no quotient (as an algebraic group) isomorphic to the additive group $\Ga$ or the multiplicative group $\Gm$. Theorem 1.2 then follows from the fact that a linear differential algebraic group containing  a Kolchin-dense finitely generated subgroup is a PPV-group of a PPV-extension of the field $k$ when $k_0$ is a $\Pi$-universal field.  This latter fact is proven in \cite{MiSi11} using analytic tools. In Section~\ref{sec4}, I show that Theorem 1.2 does not hold for general linear differential algebraic groups, that is, there is a connected linear differential algebraic group $G$ having neither $\Ga$ nor $\Gm$ as a quotient with  the further property that $G$ is not the PPV-group of any PPV-extension of $k$. Section~\ref{sec5} contains some final comments. \\[0.1in]
The author wishes to thank Phyllis Cassidy for helpful discussions concerning the content of this paper.

\section{Proof of Theorem 1.1}\label{sec2} In this section we will prove the   stronger result (Proposition~\ref{prop1}) that if $G$ is a linear {\it differential} algebraic group such that $G(k_0)$ is the PPV-group of a PPV-extension of $k$, then the identity component of $G(k_0)$ has no quotient (as a linear differential algebraic group) isomorphic to $\Ga(k_0)$ or $\Gm(k_0)$. Let $ k_0$ be a $\Pi$-differentially closed field  and let $G$ be a linear differential group defined over $k_0$ and let $G^0$ be its identity component in the Kolchin topology. 

\begin{lem}\label{lem0} The homomorphism $l\d_1:\Gm(k_0) \rightarrow \Ga(k_0)$ where $l\d_1(u) = \d_1(u)/u$ maps $\Gm(k_0)$ onto $\Ga(k_0)$.
\end{lem}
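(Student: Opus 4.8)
The map $l\d_1$ is a group homomorphism, since $l\d_1(uv)=\d_1(uv)/(uv)=\d_1(u)/u+\d_1(v)/v=l\d_1(u)+l\d_1(v)$, so the only content is surjectivity. Fix $a\in k_0=\Ga(k_0)$; I must produce $u\in\Gm(k_0)$ with $\d_1(u)=au$. Consider the differential system $P_1:=\d_1 y-ay=0$, $Q:=y\neq 0$ over $k_0$. Because $k_0$ is $\Pi$-differentially closed, it suffices to exhibit a single $\Pi$-field $F\supseteq k_0$ carrying a nonzero solution of this system; differential closedness then descends the solution to $k_0$, which is exactly the desired $u$.

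The plan is therefore to construct such an $F$. When $m=1$ this is immediate: adjoin an indeterminate $u$, extend $\d_1$ to $k_0[u,u^{-1}]$ by $\d_1 u=au$ (so $\d_1 u^{n}=nau^{n}$), and take $F=k_0(u)$. For general $m$ I would build $F=\mathrm{Frac}(B)$, where $B$ is the polynomial ring $k_0[z_\eta : \eta\in N]$, with $N$ the set of monomials in $\d_2,\dots,\d_m$, equipped with the derivations determined by $\d_i z_\eta=z_{\d_i\eta}$ for $i\geq 2$ and, after expanding $\eta(ay)$ by the Leibniz rule, $\d_1 z_\eta=\sum_{\psi\mid\eta}\binom{\eta}{\psi}(\psi a)\,z_{\eta\psi^{-1}}$. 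Morally $B$ is the quotient $k_0\{y\}/[\d_1 y-ay]$: applying a monomial $\theta$ in the $\d_i$ to the generator $\d_1 y-ay$ shows that every derivative of $y$ of positive $\d_1$-order is a $k_0$-linear combination of derivatives of strictly smaller $\d_1$-order, so after reduction only the pure derivatives $\d_2^{e_2}\cdots\d_m^{e_m}y=z_\eta$ survive. The virtue of defining $B$ directly as a polynomial ring is that it is visibly a domain, and with $u:=z_\eta$ for $\eta$ the trivial monomial the defining formula gives $\d_1 u=au$ and $u\neq 0$.

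The step I expect to require the most care is verifying that $\d_1,\dots,\d_m$ genuinely commute on $B$, so that $B$ is a $\Pi$-differential ring extending $k_0$ rather than merely a ring with operators. For indices $i,j\geq 2$ commutativity is clear from $\d_i z_\eta=z_{\d_i\eta}$; the real point is $\d_1\d_i=\d_i\d_1$ on each $z_\eta$, which is a direct Leibniz computation that reduces to Pascal/multinomial identities among the binomial coefficients $\binom{\eta}{\psi}$ (the case $m=2$, where it comes down to $\binom{k}{j-1}+\binom{k}{j}=\binom{k+1}{j}$, already exhibits the mechanism). Granting this integrability check, $F=\mathrm{Frac}(B)$ is a $\Pi$-field extension of $k_0$ in which $\d_1 y=ay$ has the nonzero solution $u$; differential closedness of $k_0$ then yields $u\in\Gm(k_0)$ with $l\d_1(u)=a$, proving that $l\d_1$ maps $\Gm(k_0)$ onto $\Ga(k_0)$.
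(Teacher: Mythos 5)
Your proposal is correct and takes essentially the same route as the paper: reduce via differential closedness of $k_0$ to exhibiting a nonzero solution of $\d_1 y = ay$ in some $\Pi$-field extension, then construct that extension as the fraction field of the free object on the $\{\d_2,\ldots,\d_m\}$-derivatives of a single generator, with $\d_1$ defined by forced commutation. Your polynomial ring $B$ on the symbols $z_\eta$ is exactly the paper's $\Pi_1$-differentially transcendental element $v$ written out coordinate by coordinate (the paper sets $\d_1(\d_2^{i_2}\cdots\d_m^{i_m}v) = \d_2^{i_2}\cdots\d_m^{i_m}(av)$, which is your Leibniz-expanded formula), and the paper likewise leaves the commutation/integrability check implicit.
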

\begin{proof} Since $k_0$ is differentially closed, we need only show that for any $u \in k_0$, there is a $\Pi$-differential extension $F$ of $k_0$ such that $\d_1y = u y $ has a solution $y \neq 0$ in $F$. Let $\Pi_1 = \{\d_2, \ldots , \d_m\}$ and let $F$ be the $\Pi_1$-field $k_0\langle v \rangle$, where $v$ is a $\Pi_1$-differentially transcendental element. We extend the derivation $\d_1$ from $k_0$ to $F$ by setting $\d_1 v = uv,$ and $\d_1(\d_2^{i_2}\ldots \d_m^{i_m} v) = \d_2^{i_2}\ldots \d_m^{i_m}(\d_1v) = \d_2^{i_2}\ldots \d_m^{i_m}(uv)$.  With these definitions, $F$ becomes a $\Pi$-differential extension of $k_0$ and $y=v$ satisfies $\d_1y = u y $.\end{proof}

\begin{lem}\label{lem1}   If $G^0(k_0)$  has 
$\Gm(k_0)$ or $\Ga(k_0)$ as a homomorphic image (under a differential algebraic  homomorphism) and $G(k_0)$ is a PPV-group of a PPV-extension of   $k_0(x)$, then $\Ga(k_0)$ is a PPV-group of a PPV-extension of  a finite algebraic extension $E$ of $k_0(x)$.
\end{lem}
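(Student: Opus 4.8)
The plan is to descend through the parameterized Galois correspondence of \cite{CaSi}, using Lemma~\ref{lem0} to convert a multiplicative quotient into an additive one. Let $K$ be the PPV-extension of $k=k_0(x)$ having PPV-group $G(k_0)$, and recall that a linear differential algebraic group has only finitely many Kolchin components, so $G^0$ is a normal Kolchin-closed subgroup of $G$ of finite index. First I would pass to $E := K^{G^0(k_0)}$. By the Galois correspondence, $E$ is an intermediate $\Delta$-field for which $\Gal(K/E)=G^0(k_0)$, so that $K$ is a PPV-extension of $E$ with PPV-group $G^0(k_0)$; moreover, since $G^0$ is normal in $G$, the extension $E/k$ is itself a PPV-extension with the \emph{finite} PPV-group $G(k_0)/G^0(k_0)$, whence $E$ is a finite algebraic extension of $k_0(x)$. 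As $E\subseteq K$ we have $E^{\d_x}=k_0$, so $E$ is of the required form.

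Next I would produce a surjective differential algebraic homomorphism $\psi\colon G^0(k_0)\twoheadrightarrow\Ga(k_0)$ out of the hypothesis. By assumption there is a surjective differential algebraic homomorphism $\phi$ of $G^0(k_0)$ onto either $\Ga(k_0)$ or $\Gm(k_0)$. In the additive case, set $\psi=\phi$. In the multiplicative case, observe that $l\d_1\colon\Gm(k_0)\to\Ga(k_0)$, $u\mapsto\d_1(u)/u$, is a differential algebraic homomorphism (it carries products to sums) and is surjective by Lemma~\ref{lem0}; hence $\psi:=l\d_1\circ\phi$ is a surjective differential algebraic homomorphism onto $\Ga(k_0)$. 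In either case put $N:=\ker\psi$, a normal Kolchin-closed subgroup of $G^0(k_0)$ with $G^0(k_0)/N\cong\Ga(k_0)$.

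Finally I would apply the Galois correspondence over $E$. The fixed field $L:=K^{N}$ lies between $E$ and $K$, and because $N$ is normal in $G^0(k_0)=\Gal(K/E)$, the extension $L/E$ is a PPV-extension whose PPV-group is $G^0(k_0)/N\cong\Ga(k_0)$. Thus $\Ga(k_0)$ is the PPV-group of a PPV-extension of the finite algebraic extension $E$ of $k_0(x)$, which is what is claimed.

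All of this rests on standard properties of the parameterized Galois correspondence (the bijection between intermediate $\Delta$-fields and Kolchin-closed subgroups, and the fact that normal subgroups cut out PPV sub-extensions with quotient Galois group), which I would simply invoke. The genuinely load-bearing step — and the only one requiring a new idea — is the passage from $\Gm(k_0)$ to $\Ga(k_0)$ via the logarithmic derivative $l\d_1$; this is precisely where the presence of a parametric derivation $\d_1\in\Pi$ is exploited, and it is what allows both forms of the hypothesis to be handled uniformly.
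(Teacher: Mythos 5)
Your proposal is correct and follows essentially the same route as the paper's own proof: take $E$ to be the fixed field of $G^0(k_0)$ (a finite algebraic extension of $k_0(x)$), reduce the $\Gm(k_0)$ case to the $\Ga(k_0)$ case by composing with the logarithmic derivative $l\d_1$ (surjective by Lemma~\ref{lem0}), and then apply the parameterized Galois correspondence (Theorem 9.5 of \cite{CaSi}) to the kernel of the resulting surjection to obtain a PPV-extension of $E$ with group $\Ga(k_0)$. The only difference is that you spell out the normality/finite-index bookkeeping that the paper leaves implicit.
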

\begin{proof} I will show that  this result  follows from the Galois theory of parameterized linear differential equations (\cite{Landesman},\cite{CaSi}). Let $K$ be a PPV-extension of $k_0(x)$ having $G$ as its PPV-group. The fixed field $E$ of $G^0$ is a finite algebraic extension of $k_0(x)$. If  $G^0$ has $\Gm(k_0)$ as a homomorphic image under a differential homomorphism then composing  composing this homomorphism with $l\d_1:\Gm(k_0) \rightarrow \Ga(k_0)$ where $l\d_1(u) = \d_1(u)/u$,  Lemma~\ref{lem0} implies  that $\Ga(k_0)$ would also be a homomorphic image of $G^0(k_0)$ under a differential homomorphism. Therefore we shall only deal with this latter case. Let $\phi: G^0(k_0) \rightarrow \Ga(k_0)$ be a surjective differential algebraic homomorphism and let $H$ be its kernel.  The Galois theory (Theorem 9.5, \cite{CaSi}) implies that the fixed field of $H$ is a PPV-extension $F$ of $E$ whose PPV-group over $E$ is differentially isomorphic to $\Ga(k_0)$.  \end{proof}
The following lemma is the key to showing that $\Ga(k_0)$ is not a PPV-group over a finite algebraic extension of $k_0(x)$. 
\begin{lem}\label{lem2} Let $E$ be a finite algebraic extension of $k_0(x)$ and $f \in E$. Let $K$ be the PPV-extension of $k_0(x)$ corresponding to the equation \[\d_x y = f. \] Let $z \in K$ satisfy $\d_xz = f$. Then there exists a nonzero linear differential operator $L \in k_0[\d_1]$ and an element $g \in E$ such that 
\[L(z) = g.\]
\end{lem}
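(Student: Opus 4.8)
The plan is to reduce the statement to a finite-dimensionality assertion about antiderivatives on a curve, phrased in the quotient $k_0$-vector space $E/\d_x E$, where $\d_x E=\{\d_x g : g\in E\}$. First I would record what must be produced: an operator $L=\sum_{i=0}^{N}c_i\,\d_1^{\,i}$ with $c_i\in k_0$ not all zero, together with $g\in E$, such that $L(z)=g$. Since $\d_x z=f$ and $\d_x$ commutes with $\d_1$ (and kills the $c_i\in k_0$), one has $\d_x\big(L(z)\big)=\sum_i c_i\,\d_1^{\,i} f$. Hence it suffices to find $c_i\in k_0$, not all zero, and $g_0\in E$ with $\sum_i c_i\,\d_1^{\,i} f=\d_x g_0$: for then $\d_x\big(L(z)-g_0\big)=0$, so $L(z)-g_0\in K^{\d_x}=k_0$, and one may take $g=g_0+\big(L(z)-g_0\big)\in E$. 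In other words, the whole lemma comes down to showing that the classes $[f],[\d_1 f],[\d_1^{\,2} f],\dots$ are linearly dependent over $k_0$ in $E/\d_x E$.

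To produce that dependence I would show all of these classes lie in one finite-dimensional $k_0$-subspace of $E/\d_x E$, resting on two points. First, because $\d_1 x=0$, the derivation $\d_1$ maps the local ring $\calO_P$ into itself at every place $P$ of $E$ outside a fixed finite set (the places over $x=\infty$, the ramification locus of $x\colon C\to\PX^1$, and the poles of $\d_1\alpha$ for a primitive generator $\alpha$ of $E/k_0(x)$). Enlarging to a finite set $S$ the poles of $f$ so as to include these, an easy induction gives that $\d_1^{\,i} f$ has all its poles inside $S$ for every $i$. Second, on the smooth projective curve $C$ with function field $E$, the $\d_x$-image of a function with poles in $S$ again has poles in $S$, so the $k_0$-span inside $E/\d_x E$ of the functions with poles confined to $S$ is a quotient of the first algebraic de Rham cohomology $H^1_{dR}(C\setminus S)$, which is finite-dimensional. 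Thus the infinite family $\{[\d_1^{\,i} f]\}_{i\ge 0}$ sits in a finite-dimensional space and must be $k_0$-linearly dependent, which by the first paragraph yields the desired $L$ and $g$.

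The main obstacle is this second point, the finiteness of the relevant piece of $E/\d_x E$. Residues alone will not do: on a curve of positive genus, the vanishing of all residues of $\sum_i c_i\,\d_1^{\,i} f$ does not make it an exact $\d_x$-derivative, the holomorphic differentials obstructing exactness. What rescues the argument is precisely that, with poles pinned inside the fixed finite set $S$, one is computing inside the de Rham cohomology of the affine curve $C\setminus S$, which is genuinely finite-dimensional (its dimension is controlled by the genus of $C$ and by $|S|$) and simultaneously accounts for residues and for the holomorphic part. A secondary care is the first point: one must verify that $\d_1$, although it may raise the order of a pole of $f$ (the underlying point $x=a$ need not be $\d_1$-constant in $k_0$), never moves a pole to a new place, so that $S$ can be fixed once and for all. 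In the genus-zero case $E=k_0(x)$ both points are transparent from partial fractions, where only the simple-pole terms survive modulo $\d_x E$ and $[\d_1^{\,i} f]$ is governed by $\d_1^{\,i}$ of the residues, giving an immediate sanity check on the whole scheme.
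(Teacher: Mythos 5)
Your proposal is correct, and it shares its outer skeleton with the paper's proof: both reduce the lemma, via $\d_x(L(z)) = L(\d_x z) = L(f)$ and $K^{\d_x}=k_0$, to producing a nonzero $L\in k_0[\d_1]$ with $L(f)\in \d_x E$, i.e.\ to a $k_0$-linear dependence among the classes $[\d_1^i f]$ in $E/\d_x E$. Where you genuinely diverge is in the finiteness input. The paper follows Manin/Chevalley: it first annihilates the finitely many nonzero residues of $f\,dx$ by a wronskian-built operator $R\in k_0[\d_1]$, using Chevalley's theorem that $\res_P\bigl((\d_1 y)\,dx\bigr)=\d_1\bigl(\res_P(y\,dx)\bigr)$; then $R(f)\,dx$ and all $\d_1^i(R(f))\,dx$ are differentials of the second kind, and the space of second-kind differentials modulo exact ones has dimension $2g$, which forces the dependence. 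You instead pin the poles of all the $\d_1^i f$ inside one fixed finite set $S$ (containing also the ramification of $x$ and the places over $\infty$) and invoke finite-dimensionality of $H^1_{\rm dR}(C\setminus S)\cong E_S/\d_x E_S$, where $E_S$ is the ring of functions with poles in $S$; in effect you absorb the residue contributions into the finite-dimensional space (dimension $2g+|S|-1$) rather than killing them first. Each route buys something: the paper never needs pole control, since ``second kind'' is a condition at all places simultaneously and is indifferent to where the poles lie, while your route dispenses with residue theory and the auxiliary operator $R$, at the cost of the pole-control lemma --- precisely the point you flag but leave unproved. That lemma is true, and the clean argument is this: if $P$ is a finite place unramified over $k_0(x)$, lying over $x=a$, then $x-a$ is a uniformizer at $P$, the completion of $\calO_P$ is $k_0[[x-a]]$, and the derivation sending $\sum_j h_j(x-a)^j$ to $\sum_j \d_1(h_j)(x-a)^j-\d_1(a)\sum_j jh_j(x-a)^{j-1}$ preserves $k_0[[x-a]]$ and extends $\d_1$ on $k_0(x)$; by uniqueness of extensions of derivations to separable algebraic extensions it agrees with $\d_1$ on $E$, whence $\d_1(\calO_P)\subset\calO_P$. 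So the exceptional set is contained in the finite union of the ramified places and the places over $\infty$, your $S$ (which contains these) works, and your induction and de Rham step go through exactly as you describe.
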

\begin{proof} The proof of this lemma is a slight modification of Manin's construction of the Picard-Fuchs equations (see Section 3, pp.~64-65 of the English translation of \cite{manin58}).  We shall use (as does Manin) ideas and results that appear in \cite{chevalley51}. In Ch.~VI, \S 7 of \cite{chevalley51}, Chevalley shows that $\d_1$ can be used to define a map $D$ on differentials of $E$ satisfying  $D(ydx) = (\d_1y)dx$. Furthermore, Theorem 13 of Ch.~VI, \S 7 of \cite{chevalley51} states that for any differential $\omega$ and any place $P$ of $E$, we have $\res_PD(\omega) = \d_1(\res_P\omega)$ (where $\res_P$ denotes the residue at $P$). Let $\alpha_1, \ldots , \alpha_m$ be the non-zero residues of $fdx$ and let $R \in k[\d_1]$ be a nonzero linear differential operator such that $R(\alpha_i) =0, i = 1, \ldots , m$\footnote{Let $C$ be the $\d_1$-constants of $k_0$ and $\beta_1, \ldots ,\beta_s$ a $C$-basis of the $C$-span of the $\alpha_i$'s. Let $R(Y) = wr(Y,\beta_1, \ldots , \beta_s)$ where $wr$ denotes the wronskian determinant. $R(Y)$ is a linear differential polynomial yielding the desired $R \in k_0[\d_1]$.}. We then have that for any place $P$, $\res_P(R(f)dx) = R(\res_P(fdx)) = 0$. Therefore $R(f)dx$ has residue $0$ at all places, that is, it is a differential of the second kind.  Note that $ \d_1^i(R(f))dx$ is also a differential of the second kind for any $i\geq1$. The factor space of differentials of the second kind by the space of exact differentials has dimension $2g$ over $k$, where $g$ is the genus of $E$ (Corollary 1,  Ch.~VI, \S 8,\cite{chevalley51}). Therefore there exist $v_{2g}, \ldots v_0 \in k_0$ such that
\[v_{2g}\d_1^{2g}(R(f))dx + \ldots +v_0R(f)dx = d\tilde{g} = \d_x\tilde{g} dx\]
for some $\tilde{g} \in E$. This implies that there exists a linear differential operator $L\in k_0[\d_1]$ such that 
\[L(f) = \d_x\tilde{g}.\] Furthermore, $\d_x(L(z)) = L(\d_xz) = L(f) = \d_x\tilde{g}$. Therefore $L(z) = g$ where $g = \tilde{g} +c$ for some $c \in k_0$.
\end{proof}

\begin{prop}\label{prop1} If $G$ is a linear differential algebraic group defined over $k_0$ such that $G^0(k_0)$ has  $\Gm(k_0)$ or $\Ga(k_0)$ as a quotient (as a linear differential group), then $G(k_0)$ cannot be a PPV-group of a PPV-extension of   $k_0(x)$.
\end{prop}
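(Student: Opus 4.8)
The plan is to reduce, via Lemma~\ref{lem1}, to a single impossibility statement and then contradict it with Lemma~\ref{lem2}. Suppose for contradiction that $G(k_0)$ is the PPV-group of a PPV-extension of $k_0(x)$. Since $G^0(k_0)$ has $\Gm(k_0)$ or $\Ga(k_0)$ as a quotient, Lemma~\ref{lem1} furnishes a finite algebraic extension $E$ of $k_0(x)$ together with a PPV-extension $F$ of $E$ whose PPV-group is (differentially isomorphic to) the full additive group $\Ga(k_0)$. It therefore suffices to show that $\Ga(k_0)$ cannot occur as the PPV-group of a PPV-extension of such an $E$, and the rest of the argument is devoted to this. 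Note that the $\Gm(k_0)$ case has already been absorbed into the $\Ga(k_0)$ case inside Lemma~\ref{lem1} via the surjection $l\d_1$ of Lemma~\ref{lem0}, so no separate treatment is needed here.

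First I would pin down the shape of $F$. Because the PPV-group is one-dimensional and unipotent, I expect to realize it through the faithful representation $\Ga \hookrightarrow \GL_2$ sending a parameter to $\left(\begin{smallmatrix} 1 & * \\ 0 & 1\end{smallmatrix}\right)$, so that $F = E\langle z\rangle$ for a single element $z$ on which the group acts by translation: every $\sigma$ in the PPV-group satisfies $\sigma(z) = z + c_\sigma$ with $c_\sigma \in k_0$, and as $\sigma$ ranges over the group the elements $c_\sigma$ exhaust all of $k_0$. Applying $\d_x$ and using that each $\sigma$ commutes with $\d_x$ while $\d_x c_\sigma = 0$, one sees that $\d_x z$ is fixed by the whole group; by the parameterized Galois correspondence $\d_x z = f$ for some $f \in E$. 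Thus $z$ is a solution of $\d_x y = f$ with $f \in E$, placing us exactly in the situation of Lemma~\ref{lem2}, which then yields a nonzero operator $L \in k_0[\d_1]$ and an element $g \in E$ with $L(z) = g$.

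The contradiction comes from letting the group act on this last relation. Fix $\sigma$ in the PPV-group, so $\sigma(z) = z + c$ with $c = c_\sigma \in k_0$. Since $\sigma$ fixes $E$ elementwise (note $k_0 \subseteq E$), commutes with $\d_1$, and $L$ is a linear differential operator with coefficients in $k_0$, applying $\sigma$ to $L(z) = g$ gives
\[
g = \sigma(g) = \sigma(L(z)) = L(\sigma(z)) = L(z+c) = L(z) + L(c) = g + L(c),
\]
whence $L(c) = 0$. As $c$ ranges over all of $k_0$, this forces $L$ to annihilate every element of $k_0$. But a nonzero $L \in k_0[\d_1]$ cannot do this: its kernel in $k_0$ is a proper Kolchin-closed subgroup of $\Ga(k_0)$, of dimension equal to the order of $L$ over the field of $\d_1$-constants, whereas $k_0$ is $\Pi$-differentially closed and hence infinite-dimensional over those constants. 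This contradiction proves the proposition.

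I would expect the only genuinely delicate point to be the structural claim of the second paragraph — that the $\Ga(k_0)$-extension is generated by a single $z$ with $\d_x z \in E$ on which the group acts by translation — since this is where one must pass from the abstract identification of the PPV-group with $\Ga(k_0)$ to the concrete inhomogeneous equation $\d_x y = f$ to which Lemma~\ref{lem2} applies. By contrast, the reduction supplied by Lemma~\ref{lem1} and the final annihilation contradiction are essentially formal once this normal form is in hand.
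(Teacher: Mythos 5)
Your skeleton is the paper's own: reduce via Lemma~\ref{lem1} to showing that $\Ga(k_0)$ cannot be the PPV-group of a PPV-extension $K$ of a finite algebraic extension $E$ of $k_0(x)$; put $K$ in the normal form $K = E\langle z\rangle$ with $\sigma(z) = z + c_\sigma$, $c_\sigma \in k_0$; deduce $\d_x z = f \in E$; invoke Lemma~\ref{lem2} to get a nonzero $L \in k_0[\d_1]$ and $g \in E$ with $L(z) = g$; and apply $\sigma$ to conclude $L(c_\sigma) = 0$ for all $\sigma$, confining the group to a proper Kolchin-closed subgroup of $\Ga(k_0)$. Your closing contradiction (the solution space of $L$ in $k_0$ is finite-dimensional over the $\d_1$-constants, while $k_0$ is not) is equivalent to the paper's phrasing that the PPV-group would then be a proper subgroup of $\Ga(k_0)$.

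The gap is precisely the step you flag as delicate, and the justification you sketch for it does not work. You cannot ``realize'' the PPV-group through the faithful representation $\Ga \hookrightarrow \GL_2$: the embedding of the PPV-group into $\GL_n(k_0)$ is dictated by whatever equation defines $K$ over $E$, not chosen by you, and a linear differential algebraic group differentially isomorphic to $\Ga(k_0)$ can sit inside $\GL_n$ in genuinely non-algebraic ways, e.g. $a \mapsto \left(\begin{smallmatrix} 1 & a & \d_1 a \\ 0 & 1 & 0 \\ 0 & 0 & 1 \end{smallmatrix}\right)$. What actually produces the normal form is a torsor argument, and this is where the paper does real work: by Proposition 9.12 of \cite{CaSi}, $K$ is the function field of a $\Ga(k_0)$-principal homogeneous space over $E$, and by the corollary to Theorem 4 of Ch.~VII, \S 3 of \cite{kolchin_groups} (the constrained analogue of additive Hilbert 90) every such torsor is trivial; triviality is exactly the statement that $K = E\langle z\rangle$ with the group acting by translations $z \mapsto z + c_\sigma$ and with $\sigma \mapsto c_\sigma$ identifying the PPV-group with all of $\Ga(k_0)$. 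With those two citations supplied in place of the representation-theoretic heuristic, your argument coincides with the paper's proof.
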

\begin{proof} Assume that $G(k_0)$ is a PPV-group of a PPV-extension of $k_0(x)$.  Lemma~\ref{lem1} implies that, in this case, $\Ga(k_0)$ is a PPV-group of a PPV-extension $K$ of $E$, where $E$ is a finite algebraic extension of $k_0(x)$.  From Proposition 9.12 of \cite{CaSi}, $K$ is the function field of a $\Ga(k_0)$-principal homogeneous space. The corollary to Theorem 4 of (Ch.~VII,\S 3, \cite{kolchin_groups}) implies that this principal  homogeneous space is the trivial principal homogeneous space and so $K = E\langle z \rangle$ where for any $\sigma \in  \Ga(k_0)$ there exists a $c_\sigma \in k_0$ such that $\sigma(z) = z +c_\sigma$.   In particular, $\sigma(\d_x z) = 
\d_x z$ for all $\sigma \in \Ga(k_0)$ and so $\d_x z = f \in E$. Lemma~\ref{lem2} implies that there exists a linear differential operator $L \in k_0[\d_1]$ and an element $g \in E$ such that 
$L(z) = g$.  For any $\sigma \in \Ga(k_0)$, we have $g = \sigma(g) = \sigma(L(z))  = L(\sigma(z)) = L(z+ c_\sigma) = g + L(c_\sigma)$ so $L(c_\sigma) = 0$.  This implies that the PPV-group of $K$ over $E$ is a proper subgroup of $\Ga(k_0)$, a contradiction.\end{proof}
Theorem 1.1 follows from Propostion~\ref{prop1} by noting that a linear algebraic group is {\it a fortiori} a linear differential algebraic group.
\section{Proof of Theorem 1.2}\label{sec3} The proof of Theorem 1.2 is inspired by  \cite{tretkoff79}. 
In this latter  paper, the authors mix analytic and algebraic facts to show that any linear algebraic group defined over $\CX$ is the Galois group of a Picard-Vessiot extension of $\CX(x)$. Their proof is based on the following facts:
\begin{enumerate}
\item Any linear algebraic group contains a finitely generated Zariski-dense subgroup.
\item Let $\PX^1(\CX)$ be the Riemann Sphere and $x_0, x_1, \ldots , x_n$ be distinct points of $\PX^1(\CX)$. If $\rho:\pi_1(\PX^1(\CX)\backslash\{x_1, \ldots , x_m\}, x_0) \rightarrow \GL_n(\CX)$ is a representation of the first homotopy group of the Riemann sphere with $m$ punctures, then there exists a linear differential equation 
\begin{eqnarray}\label{eqn1}\frac{dY}{dx} &= &AY, \mbox{ $A$ an $n\times n$ matrix with entries in $\CX(x)$}\end{eqnarray}
with only regular singularities having $\rho$ as its monodromy representation (for some choice of fundamental solution matrix).
\item If equation~\ref{eqn1} has only regular singular points, then for some choice of fundamental solution matrix, the Galois group of this equation is the smallest linear algebraic group containing the image of the monodromy representation.
\end{enumerate}
With these facts, the authors of \cite{tretkoff79} proceed as follows. Let $G \subset \GL_n(\CX)$ be a linear algebraic group. Using (1), there exist elements $g_1, \ldots g_m \in G$ that generate a Zarski-dense subgroup of $G$.  We can furthermore assume that the $g_i$ have been chosen so that $\prod_{i=1}^mg_i = 1$. Let $x_0, x_1, \ldots , x_n$ be distinct points of $\PX^1(\CX)$ and let $\gamma_i, i=1, \ldots , m$, be the obvious loops, starting and ending at $x_0$ that each enclose a unique $x_i$. The map $\rho:\gamma_i \mapsto g_i$ defines a homomorphism $\rho:\pi_1(\PX^1(\CX)\backslash\{x_1, \ldots , x_m\}, x_0) \rightarrow G \subset \GL_n(\CX)$. From (2), we can conclude that there is a linear differential equation (\ref{eqn1}) with only regular singular points having $\rho$ as its monodromy representation. From (3), we conclude that $G$ is the Galois group of this equation.\\[0.1in]
When one tries to mimic this proof in the parameterized case, one immediately is confronted with the fact that there are linear differential algebraic groups that have no finitely generated Kolchin-dense subgroups, that  is, the analogue  of (3) is no longer true. For example, as we have seen in the proof of Lemma~\ref{lem2}, if $g_1, \ldots , g_m$ are any  elements in $\Ga(k_0)$, there exists a linear differential operator $L \in k_0[\d_1]$ such that $L(g_i) = 0$ for $i =1, \ldots ,m$. This implies that any finitely generated subgroup of $\Ga(k_0)$ is contained in a proper Kolchin-closed subset and so cannot be Kolchin-dense in $\Ga(k_0)$. Nonetheless, analogues of facts (2) and (3) can be proven in the context of parameterized linear differential equations (see \cite{MiSi11}) and one can conclude  the following (Corollary 5.2, \cite{MiSi11}). Let $k_0$ be a $\Pi$-universal differential field  and let $k_0(x)$ be a differential field  as defined in the introduction.
\begin{prop}\label{mono}Let $G$ be a linear differential algebraic group defined over $k_0$ and assume that $G(k_0)$ contains a finitely generated subgroup  that is Kolchin-dense in $G(k_0)$. Then $G(k_0)$ is the PPV-group of a PPV-extension of $k_0(x)$.
\end{prop}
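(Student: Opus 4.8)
The plan is to transport the analytic argument of \cite{tretkoff79} recalled above into the parameterized setting, the essential point being that the hypothesis hands us precisely the object whose existence was guaranteed by fact (1) in the classical case (an object which, as noted, need not exist for general differential algebraic groups). Concretely, let $g_1, \ldots, g_m \in G(k_0) \subset \GL_n(k_0)$ generate a Kolchin-dense subgroup of $G(k_0)$. After enlarging the generating set by one element if necessary—replacing the list by $g_1, \ldots, g_m, (g_1\cdots g_m)^{-1}$, which generates the same group—I may assume $\prod_{i=1}^m g_i = 1$. Fixing distinct points $x_1, \ldots, x_m \in \pp$, a base point $x_0$, and the standard loops $\gamma_i$ each encircling a single $x_i$, the assignment $\gamma_i \mapsto g_i$ then determines a homomorphism $\rho: \pi_1(\pp \setminus \{x_1, \ldots, x_m\}, x_0) \to G(k_0) \subset \GL_n(k_0)$, since the only relation among the $\gamma_i$ is $\prod \gamma_i = 1$.

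First I would establish the parameterized analogue of fact (2): given such a $\rho$ with values in $\GL_n(k_0)$, there is a parameterized equation $\dx Y = AY$ with $A \in \gl_n(k_0(x))$, regular singular at the $x_i$ and nowhere else, whose monodromy representation is $\rho$. In the classical case one solves a Riemann--Hilbert problem over $\CX$; here the entries of the $g_i$ lie in the $\Pi$-field $k_0$ rather than in $\CX$, so one must produce a connection whose monodromy carries the prescribed dependence on the parameters. This is the content of the parameterized Riemann--Hilbert construction of \cite{MiSi11}: one realizes $\rho$ analytically after specializing the parameters, then argues that the resulting connection matrix can be taken with entries rational in $x$ and differential-algebraic in the parameters, so that $A$ is defined over $k_0(x)$.

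Next I would invoke the parameterized analogue of fact (3), the Schlesinger-type density theorem of \cite{MiSi11}: for a parameterized equation with only regular singular points, the PPV-group attached to a suitable fundamental solution matrix is the Kolchin closure in $\GL_n(k_0)$ of the image of the monodromy representation. Granting this, the PPV-group of $\dx Y = AY$ over $k_0(x)$ is the Kolchin closure of $\langle g_1, \ldots, g_m\rangle = \rho(\pi_1)$. By hypothesis this subgroup is Kolchin-dense in $G(k_0)$, so its Kolchin closure is exactly $G(k_0)$, and therefore $G(k_0)$ is the PPV-group of a PPV-extension of $k_0(x)$, as desired.

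I expect the main obstacle to lie in the two parameterized analogues just invoked, both genuinely analytic and imported from \cite{MiSi11}. The delicate point in the analogue of (2) is controlling how the solution of the Riemann--Hilbert problem varies with the parameters: one needs the constructed connection to have coefficients in $k_0(x)$ and, crucially, to satisfy $K^{\dx} = k_0$, so that $K$ is a genuine PPV-extension rather than one acquiring spurious new $\dx$-constants. The analogue of (3) requires extending the comparison between monodromy and Galois group from the Zariski closure to the differential-algebraic (Kolchin) closure; here the universality of $k_0$ as a $\Pi$-field is what guarantees that every Kolchin-closed condition satisfied by the monodromy is reflected in the PPV-group and conversely (\emph{cf.} the Galois correspondence of \cite{CaSi}). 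Once these two ingredients are in place, the combinatorial normalization and the density conclusion are routine.
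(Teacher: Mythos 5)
Your proposal follows essentially the same route as the paper: the paper does not prove this proposition itself but cites it as Corollary 5.2 of \cite{MiSi11}, whose content is precisely the transport of the Tretkoff--Tretkoff argument you describe, combining the parameterized Riemann--Hilbert realization (the analogue of fact (2)) with the parameterized density theorem (the analogue of fact (3)). One small correction: the paper attributes the need for $\Pi$-universality of $k_0$ to the Riemann--Hilbert step---the analytic construction does not allow one to control the algebraic nature of the coefficients of the resulting equation, so $k_0$ must be ``sufficiently large'' to contain them---rather than to the density theorem, as your final paragraph suggests.
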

The assumption that $k_0$ is universal is forced on us, at present, because the analytic techniques used to prove this result do not let us control the algebraic nature of the coefficients appearing in the differential equation defining the PPV-extension. This forces us to assume that $k_0$ is ``sufficiently large''.\\[0.1in]
Therefore to prove Theorem 1.2 we need to show that under the stated hypotheses, $G$ contains a Kolchin-dense finitely generated subgroup. In fact, we show the following. Note that for the rest of this section, $k_0$ will denote a $\Pi$-differentially closed field.
\begin{prop}\label{fingen} Let $G \subset \GL_n$ be a linear algebraic group defined over $k_0$. The group $G(k_0)$ contains a Kolchin-dense finitely generated  subgroup if and only if the identity component $G^0(k_0)$ has no quotient isomorphic (as an algebraic group)  to $\Ga(k_0)$ or $\Gm(k_0)$.\end{prop}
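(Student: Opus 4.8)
The plan is to prove both directions of the equivalence, with the forward direction (density implies no $\Ga$ or $\Gm$ quotient) being essentially a restatement of the obstruction already used in Section~\ref{sec2}, and the reverse direction (no such quotient implies the existence of a Kolchin-dense finitely generated subgroup) requiring the real work. For the forward direction, suppose $G^0(k_0)$ had, say, $\Ga(k_0)$ as an algebraic-group quotient via a surjection $\pi\colon G^0 \to \Ga$. If $\Lambda \subset G(k_0)$ were finitely generated and Kolchin-dense, then $\Lambda \cap G^0(k_0)$ would have finite index in $\Lambda$, hence be finitely generated and Kolchin-dense in $G^0(k_0)$; its image $\pi(\Lambda \cap G^0)$ would be a finitely generated subgroup of $\Ga(k_0)$, which as observed in the discussion following Lemma~\ref{lem2} lies in the proper Kolchin-closed set $\{L = 0\}$ for a suitable $L \in k_0[\d_1]$. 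Since $\pi$ is continuous and surjective on Kolchin-closures, this contradicts density. The $\Gm$ case reduces to the $\Ga$ case by composing with $l\d_1$ as in Lemma~\ref{lem1}.

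For the reverse direction I would reduce the problem to the connected case and then to structural building blocks. First reduce to $G$ connected: a finitely generated subgroup of $G^0(k_0)$ that is Kolchin-dense can be enlarged by finitely many coset representatives to a finitely generated subgroup that is Kolchin-dense in all of $G(k_0)$, using that $G/G^0$ is finite. So assume $G$ connected with no $\Ga$ or $\Gm$ quotient. The plan is to use the structure theory of connected linear algebraic groups. Writing $R_u$ for the unipotent radical and decomposing via $G = R_u \rtimes L$ with $L$ reductive, and $L = Z \cdot (L,L)$ with $Z$ central torus and $(L,L)$ semisimple, I would handle each piece. The hypothesis that $G$ has no one-dimensional quotient precisely rules out the obstructions: a nontrivial torus quotient would give a $\Gm$ quotient, and an abelianized unipotent part would give a $\Ga$ quotient. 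The key algebraic fact to establish is that over a differentially closed field, any connected semisimple group, and more generally any connected group whose abelianization is trivial (so no $\Ga$ or $\Gm$ quotient), contains a finitely generated Zariski-dense subgroup whose Zariski closure is already Kolchin-dense, or one directly exhibits finitely many generic elements.

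The crux, and the main obstacle, is understanding when Zariski-density upgrades to Kolchin-density. Over the constants a Zariski-dense finitely generated subgroup exists in any linear algebraic group (fact (1) of Section~\ref{sec3}), but Kolchin-density is strictly stronger: the $\Ga$ and $\Gm$ examples show finitely generated subgroups can be Kolchin-thin even when Zariski-dense. The right tool is Cassidy's classification of Kolchin-closed subgroups of simple algebraic groups and, more generally, the theory relating a Kolchin-closed subgroup $H \subseteq G(k_0)$ to its Zariski closure: for a group $G$ with no $\Ga$ or $\Gm$ quotient, I expect that any Zariski-dense finitely generated subgroup is automatically Kolchin-dense, because the only proper Kolchin-closed Zariski-dense subgroups of such a $G$ arise from nontrivial differential characters into $\Ga(k_0)$ or $\Gm(k_0)$, and the absence of one-dimensional algebraic quotients forbids these (by a Lie-algebra/logarithmic-derivative argument showing every differential character factors through an algebraic character). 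I would therefore first prove a lemma: a proper Kolchin-closed subgroup of $G(k_0)$ that is Zariski-dense must surject, after passing to an appropriate subquotient, onto a proper Kolchin-closed subgroup of some $\Ga(k_0)$ or $\Gm(k_0)$, contradicting the no-quotient hypothesis. Granting this lemma, the reverse direction follows immediately by taking any finitely generated Zariski-dense subgroup from fact (1): its Kolchin closure is Zariski-dense, hence by the lemma equals all of $G(k_0)$. Pinning down this lemma rigorously, via Cassidy's structure theory for differential algebraic subgroups of simple and reductive groups together with a careful treatment of the unipotent radical, is where the technical weight of the argument lies.
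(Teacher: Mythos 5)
Your forward direction matches the paper's argument and is fine. The gap is in the reverse direction, and it is fatal to the route you propose: the key lemma you ``expect'' --- that in a connected $G$ with no $\Ga$ or $\Gm$ quotient every proper Zariski-dense Kolchin-closed subgroup comes from a differential character, so that any finitely generated Zariski-dense subgroup is automatically Kolchin-dense --- is false. Take $G = \SL_2$, which is semisimple and admits no nontrivial characters, differential or algebraic. The subgroup $\SL_2(C)$, where $C = \{c \in k_0 \mid \d_1 c = 0\}$, is a proper Kolchin-closed subgroup that is Zariski-dense in $\SL_2(k_0)$ (note $C$ is algebraically closed). Concretely, the free group generated by $\left(\begin{smallmatrix} 1& 2\\ 0 & 1\end{smallmatrix}\right)$ and $\left(\begin{smallmatrix} 1& 0\\ 2 & 1\end{smallmatrix}\right)$ is finitely generated and Zariski-dense in $\SL_2(k_0)$, yet its Kolchin closure lies inside $\SL_2(C)$. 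So Zariski-density of a finitely generated subgroup does not upgrade to Kolchin-density even when no $\Ga$ or $\Gm$ quotient exists, and the obstruction is not a character: by Cassidy's classification \cite{cassidy6} (which the paper does invoke), the proper Zariski-dense differential algebraic subgroups of a semisimple group are, up to conjugation, groups of the form $P''(C')$ where $C'$ is the field of constants of some nonempty subset of $k_0\Pi$ --- constant-point subgroups, not kernels or pullbacks of differential characters.

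The paper's proof of the hard direction is built precisely to evade this counterexample. In Lemma~\ref{lem4} one does not take an arbitrary finitely generated Zariski-dense subgroup of the semisimple part $P$: one augments it by torus elements $g_1, \ldots, g_m$ whose diagonal entries $x_1, \ldots, x_m$ satisfy $\det(\d_i x_j)_{1\leq i,j\leq m} \neq 0$; if the Kolchin closure were then proper, Cassidy's theorem would force all eigenvalues of its elements to be constants for some nonzero derivation in $k_0\Pi$, contradicting that choice. Moreover, your sketch leaves the unipotent radical essentially untouched (``a careful treatment''), but this is where a second genuine idea is required: $R_u$ itself, being a power of $\Ga$, has no finitely generated Kolchin-dense subgroup whatsoever, so generators for it must be manufactured by conjugation from the Levi part. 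The paper does this by first reducing to commutative $R_u$ via Kovacic's lemma, then decomposing $R_u/(R_u,R_u)$ into irreducible $P$-modules (each with nontrivial $P$-action, by the no-$\Ga$-quotient hypothesis), and showing that the $P(k_0)$-orbit of a single highest-weight vector generates each summand (Lemma~\ref{lem5}). Without both ingredients --- differentially generic torus elements in the semisimple part and the orbit argument in the unipotent part --- your outline cannot be completed.
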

To prove this result we will need the following three lemmas.
\begin{lem}\label{lem3} Let $G$ be a linear algebraic group defined over $k_0$ and $G^0$ be its identity component. $G(k_0)$ contains a Kolchin-dense finitely generated group if and only if $G^0(k_0)$ contains a Kolchin-dense finitely generated group.
\end{lem}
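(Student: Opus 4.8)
The plan is to prove the equivalence of ``$G(k_0)$ contains a Kolchin-dense finitely generated subgroup'' with the corresponding statement for $G^0(k_0)$ by exploiting the finiteness of the component group $G/G^0$. The easy direction is immediate: if $G^0(k_0)$ already contains a Kolchin-dense finitely generated subgroup $H_0$, then since $G^0$ is Kolchin-closed in $G$ with finite index, one adjoins to a generating set of $H_0$ one representative $g_i$ from each of the finitely many cosets of $G^0(k_0)$ in $G(k_0)$. The resulting group is still finitely generated, and I would argue that its Kolchin closure contains $G^0(k_0)$ (from $H_0$) and meets every component (from the $g_i$), hence equals $G(k_0)$.

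The substance is the forward direction. First I would assume $G(k_0)$ contains a Kolchin-dense finitely generated subgroup $H$ and aim to produce one inside $G^0(k_0)$. The natural candidate is $H \cap G^0(k_0)$. Since $G^0(k_0)$ has finite index in $G(k_0)$ and $H$ is Kolchin-dense, I would first check that $H \cap G^0(k_0)$ has finite index in $H$; this follows because the composite $H \hookrightarrow G(k_0) \twoheadrightarrow G(k_0)/G^0(k_0)$ has image in a finite group, so its kernel $H \cap G^0(k_0)$ is of finite index in $H$. A subgroup of finite index in a finitely generated group is itself finitely generated (the Reidemeister–Schreier / Schreier index formula), so $H \cap G^0(k_0)$ is finitely generated.

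It then remains to show that $H \cap G^0(k_0)$ is Kolchin-dense in $G^0(k_0)$. Here is where the main care is needed. Let $\overline{H \cap G^0(k_0)}$ denote its Kolchin closure, a Kolchin-closed subgroup of $G^0(k_0)$. Since $H \cap G^0(k_0)$ has finite index, say $r$, in $H$, I would choose coset representatives $h_1, \ldots, h_r \in H$, so that $H = \bigcup_{j=1}^r h_j (H \cap G^0(k_0))$. Taking Kolchin closures and using that translation by a fixed element is a Kolchin-homeomorphism, the closure of $H$ is contained in $\bigcup_{j=1}^r h_j\, \overline{H \cap G^0(k_0)}$, a finite union of Kolchin-closed translates. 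Because $H$ is Kolchin-dense in $G(k_0)$, this finite union must be all of $G(k_0)$. Intersecting with $G^0(k_0)$ and using that $\overline{H \cap G^0(k_0)} \subseteq G^0(k_0)$, I would conclude that $G^0(k_0)$ is covered by finitely many translates of the Kolchin-closed subgroup $\overline{H \cap G^0(k_0)}$; since $G^0$ is Kolchin-connected (irreducible in the Kolchin topology), it cannot be a finite union of proper closed subsets, forcing $\overline{H \cap G^0(k_0)} = G^0(k_0)$.

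The main obstacle, and the step I would scrutinize most carefully, is this last density argument: I must ensure that the Kolchin-connectedness of $G^0$ genuinely prevents a covering by finitely many proper Kolchin-closed cosets, which requires knowing that a Kolchin-connected set is not a finite union of proper closed subsets (the analogue of irreducibility, valid here because $G^0$ is the identity component and the Kolchin topology on a differential algebraic group behaves like the Zariski topology in this respect, {\it cf.} \cite{cassidy1, kolchin_groups}). A secondary point to verify is that the coset representatives $h_j$ can be taken in $G^0(k_0)$ when intersecting with $G^0(k_0)$, or more precisely that the relevant translates either lie in $G^0(k_0)$ or are disjoint from it, so that the covering of $G^0(k_0)$ really is by translates by elements of $G^0(k_0)$ and hence by honest cosets of $\overline{H \cap G^0(k_0)}$ within the connected group $G^0(k_0)$.
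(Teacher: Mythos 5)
Your proof is correct, but the hard direction takes a genuinely different route from the paper's. (The easy direction is the same in both.) The paper does not intersect the dense subgroup with $G^0(k_0)$; instead it invokes the theorem of Wehrfritz/Borel--Serre (\cite{wehrfritz}, p.~142; \cite{borel-serre}, lemme 5.11) that $G(k_0)=H\cdot G^0(k_0)$ for some \emph{finite} subgroup $H$, writes each generator of the dense subgroup as (element of $H$)$\cdot$(element of $G^0(k_0)$), enlarges the resulting finite set $S\subset G^0(k_0)$ so that it is stable under conjugation by $H$, and shows that $H\cdot \overline{\langle S\rangle}$ is a Kolchin-closed group, hence all of $G(k_0)$; then $\overline{\langle S\rangle}$ is normal of finite index and contained in $G^0(k_0)$, so it equals $G^0(k_0)$. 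Your route replaces this structure theorem by two more elementary ingredients: the Schreier index formula (a finite-index subgroup of a finitely generated group is finitely generated), applied to $H\cap G^0(k_0)$, and a coset-covering argument for density. What the paper's approach buys is an explicit finite generating set inside $G^0(k_0)$ and no appeal to combinatorial group theory; what yours buys is independence from the existence of a finite subgroup surjecting onto the component group. Both proofs ultimately rest on the same nontrivial fact: a Zariski-connected linear algebraic group over an algebraically closed field of characteristic zero has no proper subgroup of finite index (the paper uses it in the step ``$G'$ is normal and of finite index, so it contains $G^0(k_0)$'').

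One repair is needed in your final step. You justify ``$G^0(k_0)$ cannot be a finite union of proper closed subsets'' by Kolchin-connectedness, parenthetically equated with irreducibility. Connectedness alone does \emph{not} exclude finite unions of proper closed sets (that is irreducibility), and Kolchin-irreducibility of $G^0(k_0)$, while true over a differentially closed field, is a heavier fact than you need; moreover Kolchin-connectedness is not automatic from Zariski-connectedness, since the Kolchin topology is finer. Fortunately your covering is by cosets of the single Kolchin-closed subgroup $K=\overline{H\cap G^0(k_0)}$, and distinct cosets are disjoint, so the covering already shows $[G^0(k_0):K]<\infty$. Now conclude either by quoting the no-proper-finite-index-subgroup fact above, which needs no topology at all ($G^0(k_0)$ is closed under Jordan decomposition and is generated by its tori and unipotent one-parameter subgroups, all of which are divisible, so it has no nontrivial finite quotient and hence, after passing to the normal core, no proper finite-index subgroup), or by observing that a closed subgroup of finite index is also open (its complement is a finite union of closed cosets) and then invoking Kolchin-connectedness of $G^0(k_0)$ --- which itself, however, is usually proved from the first fact. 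With either patch your argument is complete.
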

\begin{proof} Assume that $G^0(k_0)$ contains a Kolchin-dense group generated by $g_1, \ldots, g_s$.  Let $\{h_1, \ldots , h_t\}$ be a subset of $G(k_0)$ mapping surjectively onto $G(k_0)/G^0(k_0)$.  The set  \linebreak $\{g_1, \ldots , g_s,h_1, \ldots , h_t\}$  generates a group that is Kolchin-dense in $G(k_0)$.\\[0.1in]
 Assume that $G(k_0)$ contains elements $g_1, \ldots , g_s$ that generate a Kolchin-dense subgroup.  From (\cite{wehrfritz}, p.142) or (\cite{borel-serre}, lemme 5.11, p.152), one knows that any linear algebraic group $G(k_0)$, $k_0$ algebraically closed,    is of the form $HG^0(k_0)$ where $H$ is a finite subgroup of $G(k_0)$. Therefore we may write each $g_i$ as a product of an element of $H$ and an element of $G^0(k_0)$ and so  we may assume that there is a finite set $S=\{\tilde{g}_1, \ldots, \tilde{g}_t\} \subset G^0(k_0)$ such that the group generated by $S$ and $H$ is Kolchin-dense in $G(k_0)$.  Extending $S$ if necessary, we may assume that $S$ is stable under conjugation by elements of $H$ and therefore that the group generated by $S$ is stable under conjugation by the elements of $H$. An elementary topological argument shows that the Kolchin-closure $G'$ of the group generated by $S$ is also stable under conjugation by $H$.  Therefore $H\cdot G'$ forms a group. It is a finite union of Kolchin-closed sets, so it is also Kolchin-closed. It contains $H$ and $S$ so it must be all of $G(k_0)$.  Finally $G'$ is normal and of finite index in $G(k_0)$ so it must contain $G^0(k_0)$. Clearly $ G'\subset G^0(k_0)$ so $G^0(k_0) = G'$ and this shows that $G^0(k_0)$ is finitely generated. \end{proof}

\begin{lem}\label{lem4}Let $P\subset \GL_n$ be a connected semisimple linear algebraic group defined over $k_0$.  Then $P(k_0)$ contains a finitely generated Kolchin-dense subgroup. 
 \end{lem}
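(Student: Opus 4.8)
The plan is to reduce to the case of a \emph{simple} group and then to invoke Cassidy's classification of the Zariski-dense Kolchin-closed subgroups of simple algebraic groups. First I would pass to the simply connected cover: write the central isogeny $\pi:\tilde P\to P$, where $\tilde P=\tilde P_1\times\cdots\times\tilde P_r$ is a product of simply connected simple groups. Since a morphism of algebraic groups is continuous for the Kolchin topology and $\pi$ is surjective on $k_0$-points ($k_0$ being algebraically closed), the $\pi$-image of a Kolchin-dense subgroup of $\tilde P(k_0)$ is Kolchin-dense in $P(k_0)$; moreover, because the factors commute, a product $\Gamma_1\cdots\Gamma_r$ of Kolchin-dense subgroups $\Gamma_i\subset\tilde P_i(k_0)$ is finitely generated and Kolchin-dense in $\tilde P(k_0)$ (the closure of a product being the product of the closures). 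Hence it suffices to produce a finitely generated Kolchin-dense subgroup in each simple factor, and I may assume $P$ is simple.

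For simple $P$ I would start from the fact recalled above (fact (1)) that $P(k_0)$ contains a finitely generated Zariski-dense subgroup $\Gamma_0$; let $H_0$ be its Kolchin closure, a Zariski-dense Kolchin-closed subgroup, and aim for $H_0=P(k_0)$. The essential input is \textbf{Cassidy's theorem}: every proper Zariski-dense Kolchin-closed subgroup of a simple algebraic group over $k_0$ is, up to conjugacy, a \emph{constant-field form} $P(k_0^{\Sigma})$, where $k_0^{\Sigma}$ is the field of constants of a nonzero subspace $\Sigma$ of the derivations. The conceptual point — and the reason the statement holds for simple $P$ but fails for $\Ga$ and $\Gm$ — is that a simple group admits no differential homomorphism onto $\Ga(k_0)$, so there is no ``kernel-of-a-differential-character'' obstruction as in Lemma~\ref{lem2}; the only obstructions to density are the constant-field forms, and these can be escaped by a genericity argument unavailable in the additive and multiplicative cases.

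The escape step runs as follows. The subgroup $H_0$ is a proper Kolchin-closed set defined over the finitely generated differential field $\mathcal L_0$ generated by the entries of the generators of $\Gamma_0$. Because $k_0$ is $\Pi$-differentially closed it has infinite differential transcendence degree, so I can choose $\gamma\in P(k_0)$ whose entries are differentially transcendental over $\mathcal L_0$; such a $\gamma$ cannot satisfy the defining equations of $H_0$ and so $\gamma\notin H_0$. Replacing $\Gamma_0$ by $\langle\Gamma_0,\gamma\rangle$ gives a strictly larger, still finitely generated, still Zariski-dense subgroup whose Kolchin closure $H_1\supsetneq H_0$ is, by Cassidy again, either $P(k_0)$ or a constant-field form attached to a strictly smaller subspace of derivations. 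Since a strictly decreasing chain of subspaces of the $m$-dimensional space spanned by $\d_1,\ldots,\d_m$ has length at most $m$, iterating this escape terminates after finitely many steps with Kolchin closure equal to $P(k_0)$, yielding the desired finitely generated Kolchin-dense subgroup.

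The step I expect to be the main obstacle is the appeal to Cassidy's classification: establishing that the only proper Zariski-dense Kolchin-closed subgroups of a simple group are the constant-field forms, and that the resulting ascending chains of such subgroups terminate (note that Noetherianity of the Kolchin topology controls \emph{descending} chains of closed sets, not the ascending chains arising here, so termination must come from the structural classification rather than from formal topology). This is precisely where the absence of $\Ga$- and $\Gm$-quotients is used, and it is the structural heart separating the semisimple case from the additive and multiplicative groups; by comparison, the reduction to the simple case and the differential-transcendence bookkeeping in the escape step are routine.
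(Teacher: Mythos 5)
Your overall architecture --- start from a finitely generated Zariski-dense subgroup and invoke Cassidy's classification of proper Zariski-dense Kolchin-closed subgroups as conjugated constant-field forms --- is exactly the paper's, and your preliminary reduction to simple factors, while unnecessary (the paper applies Cassidy's theorem \cite{cassidy6} directly to the semisimple group $P$), is harmless. The genuine gap is the escape step. You assert that because $k_0$ is $\Pi$-differentially closed it has infinite differential transcendence degree over the finitely generated field $\mathcal{L}_0$, so that a suitable $\gamma$ exists. That is false: differential closedness is an existential-closedness condition and does not give saturation. The differential closure of $\QX$ is a differentially closed field that is prime (atomic) over $\QX$; the type of a differentially transcendental element --- and, more to the point, the Kolchin-generic type of a positive-dimensional algebraic group --- is not isolated, so it is not realized there: every element of that field is differentially algebraic over $\QX$. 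Thus if $k_0$ is the differential closure of $\QX$ and $\Gamma_0\subset P(\QX)$ (say, integer matrices generating a Zariski-dense subgroup of $\SL_n$), the element $\gamma$ your escape step needs simply does not exist in $k_0$. (A smaller point: no $\gamma\in P(k_0)$ can have \emph{all} entries differentially transcendental anyway, since the entries satisfy the polynomial equations defining $P$; what your step really requires is a Kolchin-generic point of $P$ over $\mathcal{L}_0$, and that is precisely what can fail to exist.) Your argument does go through when $k_0$ is $\Pi$-universal, but the lemma is stated, and is used in Proposition~\ref{fingen}, for arbitrary differentially closed $k_0$.

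The paper closes this hole by replacing genericity with a much weaker, always-available input. After fixing a maximal torus $T$ of $P$, diagonalized so that projection to the first diagonal entry maps $T$ onto $k_0^*$, it chooses $x_1,\ldots,x_m\in k_0$ with $\det(\d_ix_j)_{1\leq i,j\leq m}\neq 0$ --- such elements exist in \emph{every} differentially closed field because the derivations are linearly independent (\cite{DAAG}, Theorem 2, p.~96), and they may perfectly well be differentially algebraic --- and adjoins to the Zariski-dense finitely generated subgroup $H$ the elements $g_i\in T$ with first diagonal entry $x_i$. If the Kolchin closure of $\langle H,g_1,\ldots,g_m\rangle$ were proper, Cassidy's theorem would make it a conjugate of $P''(C)$ with $C$ the constants of some nonzero $\Sigma\subset k_0\Pi$, so all eigenvalues of its elements would lie in $C$; applied to the $x_i$ this gives $(a_1,\ldots,a_m)(\d_ix_j)=(0,\ldots,0)$ for any $\sum_j a_j\d_j\in\Sigma$, contradicting the determinant condition. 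This rules out every nonzero $\Sigma$ in one stroke, so no iterative chain $H_0\subsetneq H_1\subsetneq\cdots$ is needed --- a chain which, incidentally, would also require an argument that the subspaces attached by Cassidy to successive closures are nested, since both the subspace $\Sigma$ and the conjugating element can change from one step to the next.
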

 \begin{proof} From Proposition 1 of \cite{tretkoff79} or Lemma 5.13 of  \cite{PuSi2003}, we know that a linear algebraic group contains a Zariski-dense finitely generated subgroup $H$.  We also know that $P$  contains a maximal torus $T$ of positive dimension. After conjugation, we may assume that $T$ is diagonal and that the projection onto the first diagonal entry is a homomorphism of $T$ onto $k_0^* = k_0\backslash \{0\}$.  Since $k_0$ is differentially closed, the derivations $\d_1, \ldots, \d_m$ are linearly independent so there exist nonzero elements $x_1, \ldots , x_m \in k_0$ such that $\det(\d_ix_j)_{1 \leq i,j\leq m} \neq 0$ (Theorem 2, p.~96, \cite{DAAG}).  For each $i = 1 ,\ldots , m$, let $g_i \in T$ be an element whose first diagonal entry is $x_i$. Let $P'$  be the Kolchin-closure of the group generated by $H$ and $\{g_1, \ldots , g_m\}$.  I claim $P'=P$.\\[0.1in]
 To see this note that since $P'$ contains $H$, $P'$ is Zariski-dense in $P$.  If $P' \neq P$, then  results of \cite{cassidy6} imply that there exist a nonempty subset $\Sigma \subset k_0\Pi$, the $k_0$ span of $\Pi$, such that  $P'$ is conjugate to a group of the form $P''(C)$ where $P''$ is a semisimple algebraic group defined over $\QX$ and $C = \{ c \in k_0 \ | \ \d c = 0 \mbox{ for all } \d \in \Sigma\}$. This implies that each element of $G$ has eigenvalues in $C$ and so, for each $x_i, \ \d(x_i) = 0$ for all $\d \in \Sigma$.  Yet, if $\d = \sum_{j=1}^m a_j \d_j$, not all $a_j$ zero and $\d(x_i) = 0$ for $i = 1, \ldots , m$, then $(a_1, \ldots a_m)X = (0, \ldots , 0)$ where $X = (\d_ix_j)_{1 \leq i,j\leq m}$.  This contradicts the fact that $\det X \neq 0$.  Therefore $P' = P$.
 \end{proof}
 \begin{lem}\label{lem5} Let $G(k) = P(k_0)\ltimes U(k_0)$ be a connected  linear algebraic group where $P(k_0)$ is a semisimple linear algebraic group and $U(k_0)$ is a commutative unipotent group, both defined over $k_0$.  If  $G(k_0)$ has no quotient isomorphic to  $\Ga(k_0)$, then $G(k_0)$ contains a Kolchin-dense finitely generated  subgroup. 
 \end{lem}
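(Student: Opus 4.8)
The plan is to translate the hypothesis into a statement about the $P$-module structure of $U$, to enlarge the finitely generated Kolchin-dense subgroup of $P(k_0)$ produced by Lemma~\ref{lem4} by finitely many carefully chosen vectors of $U$, and then to use a one-parameter torus inside $P$ to force the Kolchin closure of the unipotent part to fill up all of $U$. Write $U$ additively as a $P$-module $V$ via the conjugation action; since $P$ is semisimple and we are in characteristic zero, $V$ is completely reducible, $V=\bigoplus_{j=1}^{r}M_j$ with each $M_j$ irreducible. Because $U$ is abelian and $P$ is perfect, the abelianization of $G$ is the group of coinvariants $V_P$, which in characteristic zero is isomorphic to the invariants $V^P$; hence $G(k_0)$ admits a quotient isomorphic to $\Ga(k_0)$ precisely when $V^P\neq 0$. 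Thus the hypothesis is equivalent to $V^P=0$, that is, to every summand $M_j$ being nontrivial, equivalently to every highest weight $\lambda_j$ of $M_j$ being nonzero. This last reformulation is what drives the whole argument.

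For the construction, fix a maximal torus $T\subseteq P$ and a Borel subgroup, and let $v_j\in M_j\subseteq V$ be a highest weight vector, of weight $\lambda_j\neq 0$. By Lemma~\ref{lem4}, choose a finite set generating a Kolchin-dense subgroup $\Gamma_P$ of $P(k_0)$, and put $\Lambda=\langle\Gamma_P,\,v_1,\dots,v_r\rangle\subseteq G(k_0)$, a finitely generated group. Let $\overline{\Lambda}$ denote its Kolchin closure. Since $\Gamma_P\subseteq\overline{\Lambda}$ is Kolchin-dense in $P(k_0)$, we have $P(k_0)\subseteq\overline{\Lambda}$; and since $V$ is normal in $G$, the set $W:=\overline{\Lambda}\cap V$ is a Kolchin-closed subgroup of $V$ stable under conjugation by $P(k_0)$, i.e. a $P(k_0)$-stable Kolchin-closed subgroup of $V$ containing each $v_j$.

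The key step, and the point I expect to be the main obstacle, is the passage from Zariski information to genuine Kolchin density, namely showing that $k_0\,v_j\subseteq W$ for each $j$. This is exactly where $\lambda_j\neq 0$ is used: choose a cocharacter $\mu\colon\Gm\to T$ with $n:=\langle\lambda_j,\mu\rangle\neq 0$. Then for every $s\in k_0^*$ the element $\mu(s)\in T(k_0)\subseteq P(k_0)$ satisfies $\rho(\mu(s))v_j=s^{\,n}v_j\in W$, and since $k_0$ is algebraically closed the map $s\mapsto s^{\,n}$ is surjective onto $k_0^*$; hence $k_0\,v_j\subseteq W$. The hypothesis $V^P=0$ is indispensable here: a trivial summand would have $\lambda_j=0$, the torus would act by $s^0=1$, and no scaling could be extracted, in accordance with the fact (already visible in Lemma~\ref{lem2}) that $\Ga(k_0)$ admits no Kolchin-dense finitely generated subgroup.

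Finally, $W$ is a $P(k_0)$-stable subgroup containing the full line $k_0\,v_j$, so it contains $\mathrm{span}_{k_0}\big(P(k_0)\cdot v_j\big)=M_j$, the $P(k_0)$-orbit of $v_j$ spanning the irreducible summand $M_j$ because $P(k_0)$ is Zariski-dense in $P$. Therefore $W\supseteq\sum_{j}M_j=V$, so $\overline{\Lambda}$ contains both $P(k_0)$ and $V$ and hence equals $G(k_0)$. Consequently $\Lambda$ is a finitely generated Kolchin-dense subgroup of $G(k_0)$, as required.
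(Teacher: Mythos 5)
Your proof is correct and takes essentially the same route as the paper's: decompose $U$ into irreducible $P$-modules, use the absence of a $\Ga(k_0)$-quotient to ensure every highest weight is nontrivial, adjoin highest weight vectors to the generators of a Kolchin-dense subgroup of $P(k_0)$ supplied by Lemma~\ref{lem4}, and use torus scaling plus irreducibility to show the closure fills out all of $U(k_0)$. The only cosmetic differences are that you reformulate the hypothesis via coinvariants/invariants ($V^P=0$) where the paper directly splits off a trivial summand as a $\Ga(k_0)$-factor, and you scale the highest weight line by a cocharacter where the paper uses surjectivity of the nontrivial character $\lambda_i$ on the connected Borel subgroup.
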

\begin{proof}  Note that $U(k_0)$ is isomorphic to $k_0^m$ for some $m$. Since $P$ acts on $U$ by conjugation, we may write $U = \oplus_{i=1}^m U_i$ where each $U_i$ is an irreducible $P$-module.  Furthermore, if the action of $P$ on some $U_j$ is trivial, then this $U_j$  would be of the form $\Ga(k_0)$ and we could write $P\ltimes U = (P\ltimes \oplus_{i\neq j}U_i)\times \Ga(k_0)$. This would imply that there is an algebraic morphism of $G(k_0)$ onto  $\Ga(k_0)$, a contradiction. Therefore we may assume the action of $P$ on each $U_i$ is nontrivial. Let $B$ be a Borel subgroup of $P$. From the representation theory of semisimple algebraic groups (Ch.13.3, \cite{humphreys}), we know that each $U_i$ contains a unique $B$-stable one-dimensional subspace corresponding to a weight $\lambda_i:B \rightarrow \Gm(k_0)$ (the highest weight of $U_i$). For each $i$, let $u_i$ span this one-dimensional space. We claim that the $P(k_0)$-orbit of $u_i$ generates a group that equals $U_i(k_0)$. Note that since  $B(k_0)$ is connected and $\lambda_i$ is not trivial, we have the $P(k_0)$-orbit of $u_i$ contains $\Gm(k_0)u_i$. Since $U_i$ is an irreducible $P(k_0)$-module, there exist $g_1, \ldots , g_s \in P(k)$, such that $g_1u_i g_1^{-1}, \ldots g_su_i g_s^{-1}$ span $U_i$. Since $g_j(\Gm(k_0)u_i)g_j^{-1} = \Gm(k_0)(g_ju_ig_j^{-1})$ for $j = 1, \ldots , s$, we have that the $P(k_0)$-orbit of $u_i$ generates all of $U_i$. \\[0.1in]
Now Lemma~\ref{lem4} asserts that there exists a finite set   $S\subset P(k_0)$ that  generates a Kolchin-dense subgroup of $P(k_0)$. We then have  that $S\cup\{u_i\}_{i=1}^m$ generates a Kolchin-dense subgroup of $G(k_0)$.  \end{proof}
\noindent {\it Proof of Proposition~\ref{fingen}.} Assume that $G(k_0)$ contains a Kolchin-dense finitely generated subgroup. Lemma~\ref{lem3} implies that $G^0(k_0)$ also contains a Kolchin-dense finitely generated subgroup.  If there is an algebraic morphism of $G^0(k_0)$ onto $\Gm(k_0)$ or $\Ga(k_0)$ then, in the first case,  composing this with the differential algebraic morphism $l\d_1:\Gm(k_0) \rightarrow \Ga(k_0)$ where $l\d_1(u) = \d_1(u)/u$, we would  have a differential algebraic morphism of $G^0(k_0)$ onto $\Ga(k_0)$.  Therefore, in either case we have a differential homomorphism of $G^0(k_0)$ onto $\Ga(k_0)$. This implies that $\Ga(k_0)$ would contain a  Kolchin-dense finitely generated subgroup.  On the other hand, any finite set of elements of $\Ga(k_0)$ satisfy a linear differential equation and so could not generate a Kolchin-dense subgroup.  Therefore there is no algebraic morphism of $G^0(k_0)$ onto $\Gm(k_0)$ or $\Ga(k_0)$.\\[0.1in]
 Assume that there is no algebraic morphism of $G^0(k_0)$ onto $\Gm(k_0)$ or $\Ga(k_0)$. Lemma~\ref{lem3} implies that it is enough to show that $G^0(k_0)$ contains a Kolchin-dense finitely generated group. We may write $G^0 = P \ltimes R_u$ where $P$ is a Levi subgroup and $R_u$ is the unipotent radical of $G$ (\cite{humphreys}, Ch.30.2). \\[0.1in]
 We first claim that $P$ must be semisimple. We may write $P = (P,P) Z(P)$ where $(P,P)$ is the derived subgroup of $P$ and $Z(P)$ is the center of $P$. Furthermore, $Z(P)^0$ is a torus (\cite{humphreys}, Ch.27.5). We therefore have a composition of surjective morphisms 
 \[G^0 \rightarrow G^0/R_u \simeq P \rightarrow P/(P,P) \simeq Z(P)/(Z(P) \cap (P,P)). \] 
  Since $G$ is connected, its image lies in the image of $Z^0(P)$ in   $Z(P)/(Z(P) \cap (P,P)$ and therefore is a torus. This  torus,  if not trivial, has a quotient isomorphic to $\Gm$. This would yield a homomorphism of $G^0(k_0)$ onto $\Gm(k_0)$ and, by assumption, this is not possible. Therefore $Z^0(P)$ is trivial. Since $G^0$ is connected we must have $Z(P) \subset (P,P)$. Therefore  $P = (P,P)$ and is therefore semisimple.\\[0.1in]
 We shall now show that it suffices to prove that $G^0(k_0)$ contains a Kolchin-dense finitely generated subgroup under the assumption that $R_u$ is commutative.  In \cite{kovacic69}, Kovacic shows (\cite{kovacic69},Lemma 2): {\em Let $G$ be an abstract group, $H$ a subgroup and $N$ a nilpotent normal subgroup of $G$. Suppose $H\cdot (N,N) = G$. Then $H = G$.} Therefore, if we can find a Kolchin-dense finitely generated subgroup  of the $k_0$-points of $G^0/(R_u,R_u) \simeq P\ltimes (R_u/(R_u,R_u))$, then the preimage of this group under the homomorphism $G^0 \rightarrow   G^0/(R_u,R_u)$ generates a Kolchin-dense subgroup of $G(k_0)$. \\[0.1in]
 Therefore, we need only consider connected groups satisfying the hypotheses of Proposition~\ref{fingen} and  of the form $P(k_0)\ltimes U(k_0)$, where $P$ is semisimple and $U$ is a commutative unipotent group.  Lemma~\ref{lem5} guarantees that such a group has a finitely generated Kolchin-dense subgroup. \hfill  $\QED$\\[0.1in]
 Proposition~\ref{mono} and Proposition~\ref{fingen} imply that Theorem 1.2 is true.\\[0.1in]
 It would be of interest to find a purely algebraic proof of Theorem 1.2 that would perhaps also show that this result is true when we weaken the hypotheses to assume that $k_0$ is only differentially closed (or even just algebraically closed).   Furthermore, the relation between the conditions that  a group contains a Kolchin-dense finitely generated subgroup and  that the group appears as a PPV-group of a PPV-extension of $k_0(x)$ should be further studied.  I know of no example of a linear differential algebraic group that is a PPV-group of a PPV-extension of $k_0(x)$ and that does not contain a Kolchin-dense finitely generated subgroup. See Section~\ref{sec5} for  further discussion concering this. 
 
   \section{An Example}\label{sec4}
In this section we give an example that shows that Theorem 1.2 and Proposition~\ref{fingen} are not true for linear differential algebraic groups in general.\\[0.1in]
Let $k_0$ be an ordinary  differentially closed field with derivation $\d_1$ and let
\[G = \{ \left(\begin{array}{cc} 1& 0 \\ a & b \end{array} \right) \ | a, b \in k_0 , \ b\neq 0, \ \d_1 b = 0 \} \simeq G_1 \rtimes G_2\]
where  
\begin{eqnarray*}
  G_1&= & \{ \left(\begin{array}{cc} 1& 0 \\ a& 1 \end{array} \right) \ | a \in k_0  \} \simeq \Ga(k_0)\\
  G_2& = & \{ \left(\begin{array}{cc} 1& 0 \\ 0 & b \end{array} \right) \ | b \in k_0 , \ b\neq 0, \ \d_1b = 0 \}
  \simeq  \Gm(C) 
\end{eqnarray*}
where $C = \{c \in k_0 \ | \ \d_1c = 0\}$. Let $k = k_0(x)$ be a $\Delta = \{ \d_x, \d_1\}$-field as in the introduction.\\[0.1in]
If $k_0$ is a universal field then Proposition~\ref{mono} implies that statement 1.~ of the following proposition follows from statement 4.  We do not make this assumption.
\begin{prop}\label{exprop} $\mbox{  } $ \begin{enumerate}
\item[1.] $G(k_0)$ contains no Kolchin-dense finitely generated  subgroup.
\item[2.] $G$  is Kolchin-connected.
\item[3.] There is no surjective differential algebraic  homomorphism of $G(k_0)$ onto $\Ga(k_0)$ or $\Gm(k_0)$. 
\item[4.] $G(k_0)$ is not a PPV-group of a PPV-extension of $k_0(x)$.
\end{enumerate}
\end{prop}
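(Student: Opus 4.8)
For (1)--(3) I would work throughout with the semidirect description, writing elements of $G$ as pairs $(a,b)$ with $a\in k_0$, $b\in C^*$ and product $(a_1,b_1)(a_2,b_2)=(a_1+b_1a_2,\,b_1b_2)$, so that $G_1=\Ga(k_0)=\{(a,1)\}$ is normal and $G_2\cong\Gm(C)$ acts by $a\mapsto ba$. For (1), given a finitely generated $\Gamma=\langle(a_1,b_1),\dots,(a_N,b_N)\rangle$, every lower entry occurring in $\Gamma$ is a $C$-combination of the $a_i$ (because each $b_i\in C$), hence lies in $V:=\mathrm{span}_C\{a_1,\dots,a_N\}$; a Wronskian produces a nonzero $L\in k_0[\d_1]$ with $L(V)=0$, and since constants commute with $\d_1$ the set $\{(a,b):L(a)=0\}$ is a proper Kolchin-closed subgroup containing $\Gamma$, so $\Gamma$ is not dense. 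For (2), $\Ga(k_0)$ is Kolchin-connected and on the constant points the Kolchin and Zariski topologies agree, so $\Gm(C)$ is connected too; an extension of a connected group by a connected group is connected. For (3), one computes $[(0,b),(a,1)]=((b-1)a,1)$ and lets $a,b$ vary to get $(G,G)=G_1$, so $G^{\mathrm{ab}}\cong\Gm(C)$ and any differential homomorphism to an abelian group factors through $\Gm(C)$; there is no nonzero differential homomorphism $\Gm(C)\to\Ga(k_0)$ (its Kolchin-closed kernel contains the Kolchin-dense torsion), and a surjection $\Gm(C)\to\Gm(k_0)$ composed with the surjection $l\d_1$ of Lemma~\ref{lem0} would surject onto $\Ga(k_0)$, which is impossible.

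For (4) I would argue by contradiction: assume $G(k_0)$ is the PPV-group of a PPV-extension $K/k_0(x)$ with $\sigma(Z)=ZM_\sigma$, $M_\sigma=(a_\sigma,b_\sigma)$. Since $G_1=(G,G)$ is a normal differential subgroup, the Galois correspondence (Theorem 9.5 of \cite{CaSi}) makes $F:=K^{G_1}$ a PPV-extension of $k_0(x)$ with group $\Gm(C)$ and $K$ a PPV-extension of $F$ with group $\Ga(k_0)$. I would first pin down $F$: if $t$ denotes the coordinate on which $\Gm(C)$ acts by $\sigma(t)=b_\sigma t$, then $\d_x t/t$ and $\d_1 t/t$ are $\Gm(C)$-invariant (the latter because $b_\sigma\in C$), giving $\d_x t=ut$, $\d_1 t=wt$ with $u,w\in k_0(x)$, and $t$ transcendental, so $F=k_0(x)(t)$. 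For $K/F$, the argument of Proposition~\ref{prop1} (Proposition 9.12 of \cite{CaSi} together with triviality of a $\Ga$-principal homogeneous space) yields $K=F\langle z\rangle$ with $\sigma(z)=z+a_\sigma$ for $\sigma\in\Ga(k_0)$ and $\d_x z=f\in F$. The general transformation law $\sigma(z)=b_\sigma^{-1}(z+a_\sigma)$ forces $\sigma(f)=b_\sigma^{-1}f$, hence $ft\in K^{G}=k_0(x)$, i.e.\ $f=r/t$ with $r\in k_0(x)$.

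The heart of the proof is a Manin-type construction producing a nonzero $L\in k_0[\d_1]$ and $g\in F$ with $L(z)=g$. The key observation is that both $\d_x$ and $\d_1$ preserve the grading of $F=k_0(x)(t)$ by powers of $t$, and that $f=r/t$ is homogeneous of degree $-1$: writing $M:=\d_1-w$ on $k_0(x)$ one gets $\d_1^{\,j}(r/t)=(M^j r)/t$, hence $\d_x(\d_1^{\,j}z)=(M^j r)/t$. Thus everything happens in the degree $-1$ piece, a copy of $k_0(x)$ carrying the rank-one connection $\nabla:=\d_x-u$, whose cokernel on $k_0(x)$ is finite-dimensional over the $\d_x$-constants $k_0$ (the de Rham finiteness for a connection on $\PX^1$, exactly the analogue of the finiteness of second-kind differentials modulo exact ones in Lemma~\ref{lem2}). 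Therefore the classes of $r,Mr,M^2r,\dots$ in $\mathrm{coker}(\nabla)$ are $k_0$-linearly dependent, giving $\ell_0,\dots,\ell_N\in k_0$, not all zero, with $\sum_j\ell_j M^j r=(\d_x-u)s$ for some $s\in k_0(x)$. Setting $L:=\sum_j\ell_j\d_1^{\,j}$ and $g:=s/t$ gives $\d_x(L(z)-g)=0$, so $L(z)=g+c\in F$ with $c\in k_0$. Applying any $\sigma\in\Ga(k_0)$ to $L(z)=g$ yields $L(a_\sigma)=0$, so the PPV-group of $K/F$ lies in the proper Kolchin-closed subgroup $\{a:L(a)=0\}$ of $\Ga(k_0)$, contradicting that this group is all of $\Ga(k_0)$.

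The main obstacle I anticipate is exactly this last part: unlike Lemma~\ref{lem2}, the field $F$ is not a finite algebraic extension of $k_0(x)$ but a transcendence degree two extension of $k_0$, so the curve-and-residue argument does not apply verbatim. The decisive step requiring care is the reduction of this two-dimensional situation to a rank-one connection on $\PX^1_{k_0}$ through the $t$-grading (which hinges on the homogeneity $f=r/t$ established above), followed by a clean proof of finite-dimensionality of $\mathrm{coker}(\d_x-u)$ over $k_0$; establishing that finiteness rigorously, or equivalently re-deriving it by an explicit partial-fraction/residue computation in the spirit of Manin's original construction, is where the real work lies.
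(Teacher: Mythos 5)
Parts (1)--(3) of your proposal are correct. Part (1) is the paper's own argument. For (2) and (3) you take mildly different routes: the paper proves (2) by divisibility of $G_1\simeq\Ga(k_0)$ (so $G_1$ dies in any finite quotient) plus Zariski-connectedness of $\Gm(C)$, and proves (3) by showing $G_2\subset\ker\phi$ via preservation of semisimplicity and then a conjugation computation; you instead use ``extension of connected by connected is connected'' and the abelianization $(G,G)=G_1$ together with a torsion-density argument for $\Gm(C)\to\Ga(k_0)$. Both variants are sound. For (4) your architecture is also the paper's: the paper works directly with a solution basis $u,v$, sets $w=u/v$ and $E=k(\d_x w)$, shows $\d_x w=r/v$ with $r\in k_0(x)$, and then constructs $0\neq L\in k_0[\d_1]$ and $h\in k_0(x)$ with $L(\d_x w)=(\d_x h+hA)\d_x w=\d_x(h\,\d_x w)$ --- which is literally your identity $\sum_j\ell_jM^jr=(\d_x-u)s$ after the substitution $s=hr$. (One small slip: the ``general transformation law'' $\sigma(z)=b_\sigma^{-1}(z+a_\sigma)$ does not follow for an abstract generator produced by torsor triviality, which only controls the $\Ga(k_0)$-action; take $z=u/v$ concretely, as the paper does, and the law is immediate.)

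The genuine gap is your finiteness claim at the decisive step: it is \emph{false} that $\mathrm{coker}(\d_x-u)$ on all of $k_0(x)$ is finite-dimensional over $k_0$. Indeed, let $a$ be any point where $u$ is regular. If $s\in k_0(x)$ is regular at $a$, then $\d_x s-us$ is regular at $a$; if $s$ has a pole of order $m\geq 1$ at $a$, then $\d_x s$ has a pole of order $m+1$ while $us$ has order at most $m$, so $\d_x s-us$ has a pole of order at least $2$ at $a$. Hence no nonzero $k_0$-combination of the functions $1/(x-a)$, with $a$ ranging over the (infinitely many) non-poles of $u$, lies in the image, and the cokernel is infinite-dimensional. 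The statement you actually need is finiteness of the cokernel of $\d_x-u$ on the subring $\calO(U)\subset k_0(x)$ of functions regular outside the fixed finite set consisting of the poles of $\d_x t/t$, $\d_1 t/t$ and $r$ (together with $\infty$); all the $M^jr$ do lie in that subring, with pole orders growing at most linearly in $j$. That restricted finiteness is true (it is algebraic de Rham finiteness for a rank-one connection on an affine open subset of $\PX^1$), but it is precisely the content of the paper's proof, which establishes it by hand: an undetermined-coefficients ansatz $L=\sum_{i=0}^{M}\alpha_i\d_1^i$, $h$ a partial-fraction expression with pole orders at most $N$, the choices $M>np$ and $N>n(M-1)$ making the resulting homogeneous linear system underdetermined, plus a separate injectivity argument ($\d_xh+Ah=0$ would force $\d_x w\in k_0(x)$, contradicting $E\neq k$) to guarantee that the solution has some $\alpha_i\neq 0$. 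So the step you flag as ``where the real work lies'' is indeed the crux, and it is not available by citation in the form you stated: the claim must first be corrected to the pole-restricted ring and then proved, e.g.\ by the paper's count. (Your cohomological formulation does have one advantage: once the restricted finiteness is in place, nonvanishing of $L$ is automatic from linear dependence of the classes $[M^jr]$, with no separate injectivity argument needed.)
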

\begin{proof}
1.~To see that $G(k_0)$ contains no Kolchin-dense finitely generated  subgroup, note that any element of $G(k_0)$ can be written as a product of an element of $\Ga(k_0)$ and $\Gm(C)$. Therefore it is enough to show that any set of elements of the form
\[\left(\begin{array}{cc} 1& 0 \\ a_1 & 1 \end{array} \right), \ldots , \left(\begin{array}{cc} 1& 0 \\ a_n & 1 \end{array} \right),\left(\begin{array}{cc} 1& 0 \\ 0 & b_1 \end{array} \right), \ldots , \left(\begin{array}{cc} 1& 0 \\ 0 & b_m \end{array} \right)\]
with the $a_i \in k_0$ and the $b_i \in C$ do not generate a Kolchin-dense subgroup of $G$. Let $H$ be the group generated by these elements and let $L \in k[\d_1]$ be a nonzero differential operator such that $L(a_i) = 0$ for all $i = 1, \ldots , n$.  A calculation shows that any element of $H$ is of the form 
\[\left(\begin{array}{cc} 1& 0 \\ c_1a_1+ \ldots +c_na_n & b \end{array} \right)\]
with $b \in k_0$ and  the $c_i \in C$.  Therefore $H$ is a subgroup of
\[ \{\left(\begin{array}{cc} 1& 0 \\ a & b \end{array} \right) \ | \ L(a) = 0, \d_1 b = 0, b\neq 0\}\]
which is a proper Kolchin-closed subgroup of $G$. \\[0.1in]
2.~Assume that $G$ is not Kolchin-connected and let $G^0$ be the identity component in the Kolchin topology. One has that $G^0$ is normal  and of finite index in $G$. Furthermore, $G/G^0$ is again a linear differential algebraic group and $\pi:G \rightarrow G/G^0$ is a differential algebraic homomorphism. Since $G_1 \simeq \Ga(k_0)$ is a divisible group and any homomorphism of a divisible group into a finite group is trivial, we have that   $G_1$ is contained in the kernel of $\pi$. This implies that $\pi$ induces a differential algebraic homomorphism $\pi^*$ from $G_2\simeq \Gm(C)$ to a finite group.  Since the elements of $\Gm(C)$ have constant entries, $\pi^*$ is really an algebraic homomorphism of $\Gm(C)$ into a finite group. Since $\Gm(C)$ is connected in the Zariski topology, this homomorphism is trivial.  Therefore $G = G^0$.\\[0.1in]
3.~Since $\Ga(k_0)$ is a differential homomorphic image of $\Gm(k_0)$, it suffices to show that there is no  surjective differential algebraic  homomorphism of $G(k_0)$ onto $\Ga(k_0)$. Assume not and let $\phi:G(k_0)\rightarrow \Ga(k_0)$. Restricting $\phi$ to $G_2$ yields an {\em algebraic} homomorphism of $\Gm(C)$ into $\Ga(k_0)$. Since algebraic homomorphisms preserve the property of being semisimple, we must have that $G_2 \subset \ker \phi$. Therefore for any $a \in k$ and any $b \in C^*$, we have 
\[ \left(\begin{array}{cc} 1& 0 \\ a & 1 \end{array} \right)\left(\begin{array}{cc} 1& 0 \\ 0 & b \end{array} \right)\left(\begin{array}{cc} 1& 0 \\ -a & 1 \end{array} \right) = \left(\begin{array}{cc} 1& 0 \\ a-ba & b \end{array} \right)\in \ker \phi .\]
For any $\tilde{a} \in k_0$ and $1\neq b \in C$ there exists a $a \in k_0$ such that $a-ba = \tilde{a}$, so $\ker \phi$ contains all elements of the form 
\[\left(\begin{array}{cc} 1& 0 \\ \tilde{a} & b \end{array} \right)\]
$a \in k_0$, $1\neq b \in C$.   Since $G_2 \subset \ker \phi$ as well, we have that $G \subset \ker \phi$, a contradiction.\\[0.1in]
4.~
Assume  $G$ is a PPV-group of a PPV-extension $K$ of $k=k_0(x)$. The field $K$ is then a PPV-extension corresponding to a second order linear differential equation, in $\d_x$, $L(Y) = 0$. There are  elements $u,v \in K$ forming a $k_0$-basis of the solutions space such that for any $\sigma \in G, \ \sigma =  \left(\begin{array}{cc} 1& 0 \\ a & b \end{array} \right)$, we have $\sigma(u) = u+av$ and $\sigma(v) = bv$.   We have $K = k<u,v>_\Delta$, the $\Delta =\{\d_x, \d_1\}$-differential field generated by $u$ and $v$. I will describe the structure of $K$ in more detail.
\begin{enumerate}
\item Let $E = k(v)$. Since for any $\sigma \in G$ there exists a $b_\sigma \in C$ such that $\sigma(v) = b_\sigma v$, we have that $\d_xv/v \in k$ and $\d_1 v/v \in k$.  Furthermore, $E$ is the fixed field of $G_1$  and the PPV-group of $E$ over $F$ is 
$G/G_1 \simeq G_2$.
\item We may write $K = E<w>_\Delta$ where $w = \frac{u}{v}$.  For any $\sigma =  \left(\begin{array}{cc} 1& 0 \\ a & 1 \end{array} \right) \in G_1$ we have $\sigma (w) = w + a$. Therefore  $\sigma(\d_x w) =\d_x w$, so $\d_x w \in E$.
\item For any $\sigma = \left(\begin{array}{cc} 1& 0 \\ 0 & b \end{array} \right) \in G_2$ we have $\sigma(\d_x w) = \d_x(\sigma(w)) = \d_x(\frac{1}{b}w) = \frac{1}{b} \d_x w$.  Since $\sigma(v) = bv$, we have  $\sigma(\d_xw \cdot v) = \d_1 w \cdot v$.  This implies $\d_1 w = r/v$ for some $r \in k$. In particular, we may write $E = k(\d_x w)$ and that $\d_x(\d_xw) /\d_x w = A \in k$ and $\d_1(\d_xw) /\d_x w = B \in k$. 
\end{enumerate}
Summarizing, we have
\[k = k_0(x) \subset E =k(\d_x w) \subset E<w>_\Delta = K\]
where \[\frac{\d_x(\d_xw)}{\d_x w }= A \in k\mbox{  and } \frac{\d_1(\d_xw)}{\d_x w} = B \in k.\]
I now claim that there exists an element $h \in k_0(x)$ and a nonzero operator
\[L = \sum_{i=0}^M \alpha_i \d_1^i\in k_0[\d_1]\] such that 
\[L(\d_x w) = (\d_x h + hA)\d_xw.\]
Let us assume that this last claim is true. We then would have that
\[\d_x(L(w) - h\d_xw) = L(\d_x w) - \d_xh \d_xw - hA\d_xw = 0.\]
Therefore $L(w) = h\d_xw + c \in E$ for some $c \in k_0$. In particular $L(w)$ is left fixed by all $\sigma \in H \simeq \Ga(k_0)$. This means that 
\[L(w) = \sigma(L(w)) = L(w + a_\sigma) = L(w) + L(a_{\sigma})\]
where $\sigma = \left(\begin{array}{cc} 1& 0 \\ a_\sigma & 1 \end{array} \right)$ and so $L(a) = 0$ for all $a \in k_0$, a contradiction.\\[0.1in]
We shall now show that the claimed $L$ and $h$ exist. If $A \in k_0$, then we let $L = A$ and $h = 1$. This yields $L(\d_xw) = A\d_xw$ as desired. Therefore we may assume $A \in k_0(x)$ but $A \notin k_0$ and so $A$ has poles.  Let $x_1, \ldots , x_p \in \PX^1(k_0)$ include all the poles of $A$ and $B$ (including $x_p =  \infty$). Select $n \in \NX$ so that $A$ and $B$ have poles of orders at most $n$ at each of these points. Select integers $M$ and $N$ such that 
\[M > np \mbox{ and } N> n(M-1)\]
(the reason for this choice will be apparent later). Let
\begin{eqnarray*}
L & = & \sum_{i=0}^M \alpha_i\d_1^i\\
h & = &\beta_{p,0} + \beta_{p,1}x + \ldots + \beta_{p,N}x^N + \sum_{i=1}^{p-1}\sum_{j=1}^N \frac{\beta_{i,j}}{(x-x_i)^j} 
\end{eqnarray*}
where the $\alpha_i$ and the $\beta_{i,j}$ are indeterminates. We shall show that the condition ``$L(\d_xw) = (\d_x h + hA)\d_xw$" forces these indeterminates to satisfy an undertedermined system of linear equations over $k_0$ and so there will always be a way to select elements of $k_0$ (not all zero) satisfying this system. Furthermore, we will show that not all the $\alpha_i$ can be zero. To see this, we shall look at the expressions $L(\d_x w)$ and $(\d_x h + hA)\d_1w$ separately.\\[0.1in]
$\underline{L(\d_xw):}$ Note that 
\begin{eqnarray*}
\d_1(\d_x w) & = & B \d_xw\\
\d_1^2(\d_x w) & = & \d_1 B \d_xw + B^2 \d_xw\\
\vdots \ \ \ & \vdots & \ \ \ \vdots \\
\d_1^i(\d_x w) & = & R_i \d_xw
\end{eqnarray*}
where  $R_i \in k_0(x)$ and $R_0 = 1, R_1 = B, R_{i+1} = \d_1 R_i + BR_i$.  We furthermore have 
\[ L(\d_x w) = (\sum_{i=0}^M \alpha_iR_i) \d_x w . \] 
Each $R_i$ has poles only at $x_1, \ldots , x_p$. Furthermore the order of a poles of each $R_i$ is at most $in$. 
Therefore the partial fraction decomposition of $\sum_{i=0}^M \alpha_iR_i$  is of the form
\begin{eqnarray}\label{gammaeqn}
\Gamma_{p,0} + \Gamma_{p,1}x + \ldots + \Gamma_{p,Mn}x^{Mn} + \sum_{i=1}^{p-1}\sum_{j=1}^{Mn} \frac{\Gamma_{i,j}}{(x-x_i)^j}& &
\end{eqnarray}
where the $\Gamma_{i,j}$ are linear forms in the $\{\alpha_r\}_{r=0}^M$. 
\\[0.1in]
$\underline{(\d_x h + Ah)\d_xw:}$ Once again  $\d_x h + Ah$ has  poles only at $x_1, \ldots , x_p$. The order at any of  these poles is at most $n + N$.  Therefore  the partial fraction decomposition of $\d_x h + Ah$ is of the form:
\begin{eqnarray}\label{lambdaeqn}
\Lambda_{p,0} + \Lambda_{p,1}x + \ldots + \Lambda_{p,Mn}x^{n+N} + \sum_{i=1}^{p-1}\sum_{j=1}^{n+N} \frac{\Lambda_{i,j}}{(x-x_i)^j}&&
\end{eqnarray}
where the $\Lambda_{i,j}$ are linear forms in the $\{\beta_{r,s}\}$ with coefficients in $k_0$. 
\\[0.1in]
The equation $L(\d_xw) = (\d_x h + hA)\d_xw$ implies that expression~(\ref{gammaeqn}) equals expression~(\ref{lambdaeqn}). 
When we  equate coefficients of powers of $x$  in these two expressions, we will produce   $n+N+ 1$ homogeneous linear equations in the variables $\{\alpha_r\}$  and $\{\beta_{r,s}\}$ (note that our assumption $N > n(M-1)$ implies that $N+n>Mn$). For each $x_i, \ i=1, \ldots, p-1$, equating the coefficients of powers of $x-x_i$ in these two expressions yields $N+n$ homogeneous linear equations in the variables $\{\alpha_r\}$  and $\{\beta_{r,s}\}$.  In total, equating coefficients of like terms yields $p(n+N) + 1$ homogenous linear equations in the $\{\alpha_r\}$  and $\{\beta_{r,s}\}$. The total number of  the $\{\alpha_r\}$  and $\{\beta_{r,s}\}$ is $M+1 + Np+1$. Because we have selected  $M >np$, we have \[M+1 + Np + 1 > np + 1 + Np + 1 = p(n+N) + 2\]
and this exceeds the number of equations.  Therefore we can find $\{\alpha_r\}$  and $\{\beta_{r,s}\}$ in $k_0$, not all zero,  that satisfy these equations.\\[0.1in]
 We shall now show that in any such choice, not all the $\alpha_r$ are zero. Assume all the $\alpha_r$ are zero.  In this case we would have $\d_xh + Ah = 0$.  This implies that $\d_x(h\d_x w ) = 0$ and so $\d_x w \in k_0(x)$.  This contradicts the fact that the field $E$ in our tower $k \subset E \subset K$ is a proper extension of $k$.
 \end{proof}
 It is interesting to contrast  the group $G$ above with  the slightly larger group 
 \begin{eqnarray*}
  G'&= & \{ \left(\begin{array}{cc} 1& 0 \\ a & b \end{array} \right) \ | a, b \in k_0 , \ b\neq 0, \ \d_1(\frac{\d_1 b}{b}) = 0 \}\\
  & = &   G_1' \rtimes G_2'
  \end{eqnarray*}
  where 
   \begin{eqnarray*}
  G_1'&= & \{ \left(\begin{array}{cc} 1& 0 \\ a& 1 \end{array} \right) \ | a, \in k_0  \} \simeq \Ga(k_0)\\
  G_2'& = & \{ \left(\begin{array}{cc} 1& 0 \\ 0 & b \end{array} \right) \ | b \in k_0 , \ b\neq 0, \ \d_1(\frac{\d_1 b}{b}) = 0 \}
  \simeq \{b\in \Gm(k_0) \ | \d_1(\frac{\d_1 b}{b}) = 0 \}
\end{eqnarray*}
Note that $G \subset G'$.   I will show that, $G'$  contains a Kolchin-dense finitely generated subgroup. To see this let 
\[ g = \left(\begin{array}{cc} 1& 0 \\ 1 & 1 \end{array} \right) \ \ \ \ \ \mbox{ and } \ \ \ \ \ h= \left(\begin{array}{cc} 1& 0 \\ 0 & e\end{array} \right)\]
where $0 \neq e \in k_0$ satisfies $\d e = e$.  I will show that the Kolchin closure of the group generated by these two elements is $G'$.\\[0.1in]
For $n \in \ZX$,  
\[h^{n}gh^{-n} = \left(\begin{array}{cc} 1& 0 \\ e^n & 1 \end{array} \right) \]
lies in $G_1'$. Since $S = \{e^n \ | \ n \in \ZX \}$ is a set of elements linearly independent over the $\d_1$-constants of $k_0$, $S$ cannot be a subset of  the solution space of a nonzero homogeneous linear differential equation with coefficients in $k_0$.  Therefore the Kolchin-closure of the group generated by $g$ and $h$ contains $G_1'$.\\[0.1in]
Let $H$ be the Kolchin-closure of the group generated by $h$. We will identify this with a subgroup of $\{b\in \Gm(k_0) \ | \d_1(\frac{\d_1 b}{b}) = 0 \} \subset \Gm(k_0)$. One sees that $H$ is Zariski-dense in $\Gm(k_0)$ and so, by (Corollary 2, p.~938,\cite{cassidy1}),  there exists a set  $\calL$ of linear operators in $\d_1$ with coefficients in $k_0$ such that $H = \{y \in \Gm(k_0) \ | \  L(\frac{\d_1 y}{y}) = 0, \forall \ L \in \calL\}$. We may assume that $\calL$ is a left ideal in  the ring $k_0[D]$ of linear operators. Since this ring  is a right euclidean domain, we have that $\calL$ is generated by a single element $R$. Since $H \subset G_1'$ we have $R$ divides the operator $D$ on the right. Therefore $R$ must equal $D$ and so $H = G_1'$. Therefore  the Kolchin-closure of the group generated by $g$ and $h$ contains $G_1'$ and $G_2'$ and so must equal $G'$. \\[0.1in]
When  $k_0$ is a universal field then Proposition~\ref{mono} implies that $G'$ is a PPV-group of some PPV-extension of $k$. In fact, as noted in Section 7 of \cite{CaSi},  when $k_0$ is the differential closure of $\CX(t)$ where $\d_1 = \frac{\d}{\d t}$, then $G'$ is the PPV-group of 
\begin{eqnarray}\label{gammafnceqn}
\d_x^2 y -\frac{t-1-x}{x}\d_xy  &= & 0 . 
\end{eqnarray} Knowing that $G'$ is the PPV-group of  equation (\ref{gammafnceqn}), one can also deduce from the results of \cite{dreyfus} (see the final comments below) that $G'$ contains a Kolchin-dense finitely generated  subgroup.


\section{Final Comments}\label{sec5} The results of Section~\ref{sec2} and Section~\ref{sec3} imply that if $k_0$ is a universal field and $G$ is a linear algebraic group defined over $k_0$, then $G(k_0)$ is a PPV-groups of a PPV- extension of $k$ if and only if $G(k_0)$ contains a Kolchin-dense finitely generated subgroup.  One can ask if this result is true for arbitrary linear differential algebraic groups. I know of no counterexamples. Of course, the implication in one direction is true since Proposition~\ref{mono} states that  a sufficient condition for $G(k_0)$ to be a PPV-group of a PPV-extension of $K$ is that $G(k_0)$ contains a Kolchin-dense finitely generated subgroup. A recent result of Dreyfus~\cite{dreyfus} bears on the implication in the other direction.  In \cite{dreyfus}, the author proves a generalization of the Ramis Density Theorem \cite{martinet_ramis} by showing  that the local parameterized Picard-Vessiot group of a parameterized linear differential equation with a fixed singular point is the Kolchin-closure of the group generated by an element called the formal parameterized monodromy, a finite number of elements called the (parameterized) Stokes operators and the constant points of a linear {\em algebraic} group called the parameterized exponential torus. As is pointed out in \cite{dreyfus},  this latter group is finitely generated. Using this result, one can further conclude that at least in the case of parameterized equations with fixed singular points, one has that the PPV-group of a parameterized linear differential equation contains a Kolchin-dense finitely generated subgroup.


\bibliographystyle{amsplain}
\newcommand{\SortNoop}[1]{}\def\cprime{$'$}

\end{document}